\documentclass[11pt,twoside]{amsart}
\usepackage{amsmath, amsthm, amscd, amsfonts, amssymb, graphicx, color}
\usepackage[bookmarksnumbered, colorlinks, plainpages]{hyperref}

\textwidth 13 cm \textheight 18 cm

\oddsidemargin 2.12cm \evensidemargin 1.8cm \setcounter{page}{1}
\usepackage{amsmath}
\usepackage{amsfonts}

\newtheorem{theo}{Theorem}[section]
\newtheorem{pro}[theo]{Proposition}
\newtheorem{coro}[theo]{Corollary}
\newtheorem{lem}[theo]{Lemma}

\theoremstyle{definition}
\newtheorem{exam}[theo]{Example}
\newtheorem{rem}[theo]{Remark}
\newtheorem{defi}[theo]{Definition}

\newtheorem{nota}[theo]{Notation}
\newtheorem{que}[theo]{Question}

\newcommand{\ndim}{\mbox{{\it n}-{\rm dim}}}
\newcommand{\kdim}{\mbox{{\it k}-{\rm dim}}}

\newcommand{\be}{\begin{enumerate}}
\newcommand{\ee}{\end{enumerate}}
\author{S.M. Javdannezhad}
\address{Sayed Malek Javdannezhad, Department of Science, Shahid Rajaee Teacher Training University: Tehran, IR}
\email{sm.javdannezhad@gmail.com}

\author{M. Maschizadeh}
\address{Mohammad Maschizadeh, Department of Mathematics, Shahid Chamran University of Ahvaz,  Ahvaz, Iran}
\email{m. maschi96@gmail.com}

\author{N. Shirali}
\address{Nasrin Shirali, Department of Mathematics, Shahid Chamran University of Ahvaz,  Ahvaz, Iran}
\email{shirali\_n@scu.ac.ir\\nasshirali@gmail.com}

\title[On $AB5^*$ modules with Noetherian dimension]{On $AB5^*$ modules with Noetherian dimension}

\begin{document}

\thanks{{\scriptsize
        \hskip -0.4 true cm MSC(2010): Primary: 16P60, 16P20 ; Secondary:
        16P40.
        \newline Keywords: Noetherian dimension, Krull dimension, $AB5^*$ Modules, $\alpha$-short modules}}


\begin{abstract}
In this paper, we study the Noetherian dimension of sum of certain modules.  It is proved that for any module $M$ which is an irredundant sum of submodules, each of which has Noetherian dimension $\leq \alpha$, if $M$ has finite spanning dimension ($fsd$-module, for short) or it is a weakly atomic module, then $\ndim\,M \leq \alpha$. Here, by a weakly atomic module we mean a module $M$ for which every proper non-small submodule $N$, has Noetherian dimension strictly less than that of $M$. Also, it is proved that if $M$ is an $AB5^*$ module with Noetherian dimension and  $\{N_i\}$  is a family of submodules of $M$ such that $\ndim\,\frac{M}{N_i} \leq \alpha$, for each $i$,  then $\ndim\,\frac{M}{\cap N_i} \leq \alpha$.  Using this, we give a structure theorem for $\alpha$-short modules in the category of $AB5^*$ and finally, we classify $\alpha$-short modules in this category.
\end{abstract}

\maketitle
\section{Introduction}
 Throughout, all rings are associative with identity and, unless stated otherwise, all modules are unital right modules. Let $R$ be a ring and let $M$ be an $R$-module. $N \ll M$ will donote $N$ is an  $N$ is a small submodule of $M$. Also $\kdim\,M$ and $\ndim\,M$ denote respectively the Krull dimension and the Noetherian dimension of $M$. The existence of these two dimensions are equivalent for any module $M$, see \cite{le2} . The latter dimension is also called the dual Krull dimension in many other articles. For more detailes on the Krull and the Noetherian dimension, the reader is referred to  \cite{al-ri, al-sm,   go-ro, ka1, le2}. It is convenient, when we are dealing with these dimensions, to begin our list of ordinals with $-1$.\\
 The purpose of this paper is to study some natural unquestioned results and properties concerning the Noetherian dimension of rings and modules. Although many studies and advances have been made in this field, but some trivial and natural questions in this area remain still unanswered. For example, one of
 the basic results concerning a module with Krull dimension due to Gordon, Robson and Lenagan is as follows and in order to honor them,
 we call it $GRL$-Theorem, see \cite{go-ro, len}.

 \begin{theo}\label{grl}
 [GRL] Let an $R$-module $M$ have Krull dimension and also be the sum of submodules each of which has Krull dimension $\leq \alpha$. Then $\kdim\,M \leq \alpha$.
 \end{theo}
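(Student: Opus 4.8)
The plan is a transfinite induction on $\alpha$, driven by the easy finite-sum case. First I would record the behaviour of Krull dimension on a short exact sequence $0\to A\to B\to C\to 0$ of modules with Krull dimension, namely $\kdim B=\max\{\kdim A,\kdim C\}$. Applying this to the sequence $0\to N_1+\cdots+N_{k-1}\to N_1+\cdots+N_k\to (N_1+\cdots+N_k)/(N_1+\cdots+N_{k-1})\to 0$, whose right-hand term is a homomorphic image of $N_k$, an induction on $k$ gives $\kdim(N_1+\cdots+N_k)=\max_i\kdim N_i$; in particular a \emph{finite} sum of submodules of Krull dimension $\le\alpha$ has Krull dimension $\le\alpha$.

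For the general sum I would well-order the index set and consider the partial sums $P_\mu=\sum_{\lambda<\mu}M_\lambda$. These form an increasing chain of submodules of $M$, continuous at limit ordinals, with $P_{\mu+1}=P_\mu+M_\mu$ and $\bigcup_\mu P_\mu=M$. Proving $\kdim P_\mu\le\alpha$ by induction on $\mu$: the case $\mu=0$ is trivial, the successor step is the two-term instance of the finite case, and every $P_\mu$, being a submodule of $M$, has Krull dimension; so the whole theorem reduces to the limit case, i.e.\ to the following claim. If $M$ has Krull dimension and $M=\bigcup_\nu N_\nu$ for an increasing chain $\{N_\nu\}$ of submodules with $\kdim N_\nu\le\alpha$ for every $\nu$, then $\kdim M\le\alpha$.

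To establish this claim I would invoke the basic structural fact that a module with Krull dimension has finite uniform dimension (see \cite{go-ro}). The uniform dimensions of the $N_\nu$ then form a non-decreasing sequence of integers bounded above by that of $M$, so they stabilise: for some index $\nu_0$ and all $\nu\ge\nu_0$, $N_{\nu_0}$ is essential in $N_\nu$. Since each nonzero cyclic submodule of $M$ lies in some $N_\nu$, it follows that $N_{\nu_0}$ is essential in $M$. The claim would then follow from the statement: \emph{if $N$ is an essential submodule of a module $M$ having Krull dimension, then $\kdim M=\kdim N$} — applied with $N=N_{\nu_0}$, whose Krull dimension is $\le\alpha$. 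I expect this essential-submodule statement to be the main obstacle. By the extension formula it amounts to bounding $\kdim(M/N)$ by $\kdim N$; running a descending chain $M=L_0\supsetneq L_1\supsetneq\cdots$ and intersecting with $N$ shows only that the successive quotients have Krull dimension $\le\kdim N$, not strictly less, which is not enough to bound $\kdim M$. The genuine argument has to use that $N\cap L_k$ is essential in $L_k$, that the ``defect'' quotients $L_k/((N\cap L_k)+L_{k+1})$ are singular, and that these too inherit finite uniform dimension, feeding an induction on $\kdim N$; this is the technical heart of the Gordon--Robson--Lenagan theorem, after which the claim, and with it the theorem, follows.
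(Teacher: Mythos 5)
You should first note that the paper does not prove Theorem \ref{grl} at all: it is quoted from \cite{go-ro} and \cite{len}, so your attempt has to be measured against Lenagan's published argument rather than anything in this text. Your reductions are sound as far as they go: the exact-sequence formula handles finite sums, the transfinite induction over partial sums correctly reduces the theorem to the case of an increasing chain $M=\bigcup_{\nu}N_{\nu}$ with $\kdim\,N_{\nu}\leq\alpha$, and the stabilisation of uniform dimensions does produce an $N_{\nu_{0}}$ of Krull dimension $\leq\alpha$ that is essential in $M$.

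The gap is the lemma you then lean on, and which you yourself flag as ``the technical heart'': it is \emph{false} that an essential submodule of a module with Krull dimension has the same Krull dimension, i.e.\ false that $\kdim\,(M/N)\leq\kdim\,N$ for $N$ essential in $M$. Equivalently, a uniform module with Krull dimension need not have all of its nonzero submodules of the same Krull dimension. Counterexamples are classical: over suitable Noetherian rings (Musson's examples, e.g.\ group algebras of polycyclic groups that are not nilpotent-by-finite, or enveloping algebras of solvable non-nilpotent Lie algebras) there exist finitely generated --- hence Noetherian, hence Krull-dimensional --- modules $M$ which are essential extensions of a simple module $S$ but are not Artinian, because injective hulls of simple modules need not be locally Artinian. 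For such $M$ one has $\kdim\,S=0$ while $\kdim\,M\geq 1$, so $\kdim\,(M/S)>\kdim\,S$; no refinement of the singular-module argument you sketch can repair this, since the target statement itself fails. The chain case genuinely requires the double-chain criterion, which is Lemma \ref{l2} of this paper (Gordon--Robson, Proposition 4.5, as used by Lenagan in \cite{len}): assuming $\kdim\,M>\alpha$, one interleaves an ascending chain of finite partial sums $B_{n}$ of the given submodules (each of Krull dimension $\leq\alpha$) with a suitably chosen descending chain $A_{n}$ so that $A_{n}\cap B_{n+1}\nsubseteq A_{n+1}+B_{n}$ for every $n$, contradicting the existence of Krull dimension. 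This is precisely the argument that Theorem \ref{t2} of the paper dualizes, so the machinery you would need is already on display there.
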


 The natural dual of this fact is as follows: Let $M$ be an $R$-module with  Noetherian dimension and  $\{N_i\}$  a family of submodules of $M$ such that $\ndim\,\frac{M}{N_i} \leq \alpha$, for each $i$, then $\ndim\,\frac{M}{\cap N_i} \leq \alpha$. Here, it is natural to be wondered about the statements of Theorem \ref{grl} and its dual for Noetherian dimension, and we are led to ask the following questions.

\begin{que}\label{q1}
    Is the statement of Theorem \ref{grl} is also valid for Noetherian dimension?
\end{que}
\begin{que}\label{q2}
 Is the dual of Theorem \ref{grl} holds for Noetherian dimension?
\end{que}

This paper is organized as follows. In Section $2$, as the main aim, we answer to both of the above questions. Although Example \ref{zp} shows that the answer to Question \ref{q1}  is not "Yes" in general. However, we shall see that it is "Yes" for certain modules which are an irredundant sum of their submodules. We show that if $M$ is an irredundant sum of submodules with Noetherian dimension $\leq \alpha$ and $\ndim\,M > \alpha$, then $M$ necessarily has an infinite strictly descending chain of non-small submodules with Noetherian dimension equal to   $\ndim\,M$.  From this, we see that if such a module $M$ has finite spanning dimension or it is weakly-atomic (i.e., every non-small submodules of $M$ has Noetherian dimension strictly less than $\ndim\,M$), then $\ndim\,M \leq \alpha$. Also by an example we show that the answer to Question \ref{q2} is not "Yes" in general and we shall see that in caes $M$ satisfies in the property of $AB5^*$, then the answer is "Yes". Using these facts, in Section $3$, we give a structure theorem for $\alpha$-short modules in the category of $AB5^*$ and then we classify these modules. We show that for any module $M$ in the category of $AB5^*$, $M \in \mathcal{A}$ (that is, the $\alpha$-shortness and the Noetherian dimension of $M$ are equal) if and only if $\ndim\,M$ is a limit ordinal or $\ndim\,A \leq \ndim\,\frac{M}{A}$, for any atomic submodule $A$ of $M$.

\section{ GRL theorem and its dual for Noetherian dimension}

We  begin by focus on Question \ref{q1}. First, we give a
counterexample to show that the answer to Question \ref{q1} is not
'Yes" in general.

\begin{exam}\label{zp}
 The $p$-prufer group $M= \mathbb{Z}_{p^\infty}$ is a uniserial $\mathbb Z$-module and $$H_0 \subseteq H_1 \subseteq H_2 \subseteq \dots H_n \subseteq H_{n+1} \subseteq \dots$$ is the uniqe chain of its submodule, where  $H_n = <\frac{1}{p^n} + \mathbb Z>$, for each integer $n \geq 0$. Note that $\mathbb Z_{p^{\infty}} = \sum_{n \geq 0}H_n$. It is easy to verify that $\ndim\,H_n = 0$, that is every $H_n$ is Noetherian . However $\ndim\,M =1$.
\end{exam}

 If $M =\sum_\Delta  M_\delta$, then this sum is called irredundant if, for every $\delta_0$, $ M \ne \sum_{\delta \ne \delta_0} M_\delta $. Note that to represent a  module as an irredundant sum of its submodules is not the case in general. For example uniserial modules and local modules are not representable as an irredundant sum of their submodules.

\begin{pro}\label{irr}
    Let  $M$ be an irredundant sum of submodules each of which has Noetherian dimension $\leq \alpha$. If $\ndim\,M > \alpha$, then $M$ has an infinite descending chain $A_1 \supsetneq A_2 \supsetneq A_3 \supsetneq \dots$ of non-small submodules such that $\ndim\,A_i = \ndim\,M$.
\end{pro}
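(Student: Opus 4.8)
The plan is to build the chain by discarding the generating submodules one at a time along an infinite index set, using irredundancy to force strict descent and the exact‑sequence behaviour of Noetherian dimension to keep both the dimension and the non‑smallness of the terms.

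First I would record the two standard facts on which everything rests: for a submodule $N\leq M$ one has $\ndim M=\max\{\ndim N,\ndim(M/N)\}$, and hence a finite sum $N_1+\cdots+N_k$ of submodules with $\ndim N_i\leq\alpha$ again has $\ndim\leq\alpha$ (induct, noting that $(N_1+N_2)/N_1$ is an epimorphic image of $N_2$). In particular, since $\ndim M>\alpha$ while each $M_\delta$ has $\ndim M_\delta\leq\alpha$, the index set $\Delta$ must be infinite — otherwise $M$ would be a finite sum of the $M_\delta$ and so have $\ndim\leq\alpha$. It is also worth isolating the observation that any sub‑sum of an irredundant sum is again irredundant, which is what makes the construction below run.

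Now choose pairwise distinct indices $\delta_1,\delta_2,\delta_3,\dots\in\Delta$ and set $A_0=M$ and $A_k=\sum_{\gamma\in\Delta\setminus\{\delta_1,\dots,\delta_k\}}M_\gamma$ for $k\geq1$. Three things need checking. (i) $A_k\subsetneq A_{k-1}$: if they were equal then $M_{\delta_k}\subseteq\sum_{\gamma\neq\delta_k}M_\gamma$, whence $M=\sum_{\gamma\neq\delta_k}M_\gamma$, contradicting irredundancy of the given sum. (ii) $\ndim A_k=\ndim M$: since $M=A_k+(M_{\delta_1}+\cdots+M_{\delta_k})$, the quotient $M/A_k$ is an epimorphic image of $M_{\delta_1}+\cdots+M_{\delta_k}$, hence $\ndim(M/A_k)\leq\alpha<\ndim M$, and combined with $\ndim M=\max\{\ndim A_k,\ndim(M/A_k)\}$ this gives $\ndim A_k=\ndim M$. (iii) $A_k\not\ll M$: if $A_k\ll M$, then from $M=A_k+(M_{\delta_1}+\cdots+M_{\delta_k})$ we would get $M=M_{\delta_1}+\cdots+M_{\delta_k}$, again forcing $\ndim M\leq\alpha$.

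Putting these together, $A_1\supsetneq A_2\supsetneq A_3\supsetneq\cdots$ is the required infinite strictly descending chain of non‑small submodules with $\ndim A_i=\ndim M$. There is no serious obstacle; the one point that needs care is (iii): one should not try to deduce that $A_k$ is non‑small in $M$ from its being non‑small in $A_{k-1}$, because smallness does not propagate in that direction, and should argue instead directly from the decomposition $M=A_k+M_{\delta_1}+\cdots+M_{\delta_k}$.
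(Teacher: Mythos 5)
Your proof is correct and follows essentially the same route as the paper's: both build the chain by successively dropping generators from the irredundant sum, using irredundancy for strict descent and the fact that $M/A_k$ is an epimorphic image of a finite sum of the $M_{\delta_i}$ to get $\ndim(M/A_k)\leq\alpha$, hence $\ndim A_k=\ndim M$ and $A_k\not\ll M$. Your write-up merely makes explicit a few points the paper leaves implicit (that $\Delta$ is infinite, and the direct argument for non-smallness).
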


\begin{proof}
    Let  $M=\sum_{i \in \Delta}N_i$ be an irredundant sum with  $\ndim\,N_i \leq \alpha$.  If $\ndim\,M > \alpha$, then $\ndim\,\frac{M}{N_i} > \alpha$ for each $i$. Set $A_1 = \varSigma_{j\neq 1} N_j$, then $N_1 \nsubseteq A_1$ (note, if $N_1 \subseteq A_1$ then $N_1$ may be omited) and so $N_1 \cap A_1 \subsetneq N_1$. This implies that $0 \neq \frac{N_1}{N_1 \cap A_1} \cong \frac{N_1 + A_1}{A_1} = \frac{M}{A_1}$. So $\ndim\,\frac{M}{A_1} \leq \alpha$ and consequently, $\ndim\,A_1 > \alpha$. Continue this manner and by the way of induction, we obtain an infinite descending chain  $A_1 \supsetneq A_2 \supsetneq A_3 \supsetneq \dots$   of non-small submodules, such that $\ndim\,\frac{M}{A_i} \leq \alpha$. Hence, $\ndim\,A_i = \ndim\,M$ and we are done.
\end{proof}

  Recall that an $R$-module $M$ has finite spanning dimension ($fsd$-module, for short) if for every descending chain  $N_1\supseteq N_2\supseteq N_3 \supseteq \dots$ of submodules $M$, there is a number $n$ such that $N_i \ll M$ for all $i > n$, or equvalently $M$ satisfies the DCC on non-small submodules. Artinian and hollow modules are examples for $fsd$-modules. From Theorem \cite[3.1]{fle}, it follows that every $fsd$-module $M$ is an irredundant finite sum of hollow submodules. Hence  $\ndim\,M = \ndim\,H$ for some hollow submodule $H$ of $M$. In what follows, we give the generalization of this fact for every arbitrary representation of $M$ as an irredundant sum of submodules.

\begin{theo}
    Let an $fsd$-module
 $M$ be an  irredundant sum of submodules each of which has Noetherian dimension $\leq \alpha$.  Then $\ndim\,M \leq \alpha$.
\end{theo}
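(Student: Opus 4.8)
The plan is to obtain the theorem as a direct consequence of Proposition \ref{irr}, arguing by contradiction and using the equivalence, recalled above, between being an $fsd$-module and satisfying the DCC on non-small submodules. Suppose $\ndim\,M > \alpha$. Writing $M = \sum_{i\in\Delta} N_i$ for the given irredundant sum, with $\ndim\,N_i \leq \alpha$ for every $i$, Proposition \ref{irr} applies and produces an infinite strictly descending chain $A_1 \supsetneq A_2 \supsetneq A_3 \supsetneq \cdots$ of \emph{non-small} submodules of $M$. But $M$, being an $fsd$-module, satisfies the DCC on non-small submodules, so it admits no such chain. This contradiction forces $\ndim\,M \leq \alpha$.

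The one point that needs genuine care, and which I expect to be the main (mild) obstacle, is justifying that $\ndim\,M$ exists at all, so that the inequality $\ndim\,M > \alpha$ and the invocation of Proposition \ref{irr} are legitimate rather than vacuous. I would settle this before the contradiction step. The cleanest self-contained route is to observe that the $fsd$-hypothesis already forces $M$ to be a \emph{finite} sum of its $N_i$: peeling off the summands $N_{i_1}, N_{i_2}, \dots$ one index at a time as in the proof of Proposition \ref{irr} gives a strictly descending chain $M = A_0 \supsetneq A_1 \supsetneq A_2 \supsetneq \cdots$ with $A_k = \sum_{j \notin \{i_1,\dots,i_k\}} N_j$ and $M = N_{i_1} + \cdots + N_{i_k} + A_k$; if some $A_k$ is small in $M$ then at once $M = N_{i_1} + \cdots + N_{i_k}$, while if no $A_k$ is small the DCC on non-small submodules stops the chain. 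Either way $M = N_{i_1} + \cdots + N_{i_m}$ for some $m$, a finite sum of submodules of Noetherian dimension $\leq \alpha$. Applying the standard behaviour of Noetherian dimension along the short exact sequences $0 \to \sum_{t>s} N_{i_t} \to \sum_{t\geq s} N_{i_t} \to \bigl(\sum_{t\geq s} N_{i_t}\bigr)/\bigl(\sum_{t>s} N_{i_t}\bigr) \to 0$, whose quotient terms are homomorphic images of $N_{i_s}$ and hence of Noetherian dimension $\leq\alpha$, one gets by downward induction on $s$ that $\ndim\,M$ exists and is $\leq\alpha$. (Alternatively one may simply quote that every $fsd$-module has Noetherian dimension.)

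With existence in hand, the theorem is immediate from Proposition \ref{irr} and the DCC characterization of $fsd$-modules; in fact the second paragraph already proves the statement on its own, so the contradiction argument via Proposition \ref{irr} is really just the conceptual packaging. I would also remark afterwards that this refines the observation made just before the statement, namely that $\ndim\,M = \ndim\,H$ for some hollow submodule $H$ in a decomposition of $M$ as a finite irredundant sum of hollow submodules, since the present result bounds $\ndim\,M$ by $\alpha$ for \emph{every} representation of $M$ as an irredundant sum of modules of Noetherian dimension $\leq \alpha$, not only the hollow one.
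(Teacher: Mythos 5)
Your first paragraph is exactly the paper's proof: the authors simply note that an $fsd$-module satisfies the DCC on non-small submodules and invoke Proposition \ref{irr} to rule out $\ndim\,M > \alpha$. So the core argument is correct and identical in approach. Your second paragraph goes beyond the paper: the authors do not address whether $\ndim\,M$ exists in the first place, which is a genuine (if mild) gap, since Proposition \ref{irr} and its proof tacitly use that $M$ has Noetherian dimension (e.g.\ in passing from $\ndim\,\frac{M}{N_i}\leq\alpha$ to $\ndim\,A_i=\ndim\,M$). Your reduction to a finite sum via the DCC on non-small submodules, followed by the short-exact-sequence induction, settles this cleanly and in fact yields a self-contained proof of the whole theorem that bypasses Proposition \ref{irr} entirely; the paper's version is shorter but rests on the unstated assumption that $M\in\mathcal M$.
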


\begin{proof}
    Since $M$ is an $fsd$-module, it has not any infinite descending chain of non-small submodules. Hence we are done, by Proposition \ref{irr}.
\end{proof}

 \begin{defi}
 An $R$-module $M$ is called  weakly atomic if $\ndim\,N < \ndim\,M $, for every proper non-small submodule $N$ of $M$.
 \end{defi}

\begin{rem}
Since every submodule of an atomic module is small, we conclude that every atomic module is a weakly atomic module, but not in vice versa. For example, let $M$ be a local module (i.e., if it has exactly one maximal submodule that contains all proper submodules) with the unique maximal submodule $K$ such that $\ndim\,M \geq 1$.  Since all proper submodules of $M$ are small, $M$ is weakly atomic. Furthermore, since $M$ has Noetherian dimension $ \geq 1$, $\ndim\,M = \sup\{\ndim\,K, \ndim\,\frac{M}{K}\} = \ndim\,K$ and this shows that $M$ is not atomic.
\end{rem}

\begin{pro}
    Let a weakly atomic module $M$ be an  irredundant sum of submodules each of which has Noetherian dimension $\leq \alpha$.  Then  $\ndim\,M \leq \alpha$.
 \end{pro}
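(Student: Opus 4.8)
The plan is to combine Proposition~\ref{irr} with the definition of a weakly atomic module, exactly as in the $fsd$ case. Suppose, toward a contradiction, that $\ndim\,M > \alpha$. Since $M$ is an irredundant sum of submodules each of Noetherian dimension $\leq \alpha$, Proposition~\ref{irr} produces an infinite strictly descending chain $A_1 \supsetneq A_2 \supsetneq \dots$ of non-small submodules of $M$ with $\ndim\,A_i = \ndim\,M$ for every $i$. In particular $A_1$ is a proper non-small submodule of $M$: it is proper because an irredundant sum has at least two summands, so $A_1 = \sum_{j \neq 1} N_j \neq M$ (this is precisely the observation $N_1 \nsubseteq A_1$ used in the proof of Proposition~\ref{irr}), and it is non-small by construction.

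Now apply the weakly atomic hypothesis to $A_1$: since $A_1$ is a proper non-small submodule of $M$, we must have $\ndim\,A_1 < \ndim\,M$. But Proposition~\ref{irr} gives $\ndim\,A_1 = \ndim\,M$, a contradiction. Hence the assumption $\ndim\,M > \alpha$ is untenable, and therefore $\ndim\,M \leq \alpha$.

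The argument is short and the only point that needs a moment's care is confirming that the chain supplied by Proposition~\ref{irr} really consists of \emph{proper} submodules, so that the weakly atomic condition applies — but this is immediate from irredundancy, since each $A_i$ is contained in $A_1 \subsetneq M$. I do not expect any genuine obstacle here; the real content was already extracted into Proposition~\ref{irr}, and this statement is the weakly-atomic twin of the $fsd$ theorem proved just above.
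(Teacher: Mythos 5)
Your proof is correct and follows exactly the route the paper intends: the paper states this proposition without an explicit proof, clearly meaning it to follow from Proposition~\ref{irr} combined with the weakly atomic hypothesis, in direct parallel with the $fsd$ theorem proved just above it. Your additional care in checking that the $A_i$ are \emph{proper} submodules (via irredundancy, since $A_1=\sum_{j\neq 1}N_j\neq M$) is a point the paper glosses over, and it is handled correctly.
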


 \begin{theo}
 Let $M$ be a weakly atomic module with $J(M) \ll M$. If $M$ is a sum of hollow submodules each of which has Noetherian dimension  $\leq \alpha$. Then $\ndim\,M \leq \alpha$.
 \end{theo}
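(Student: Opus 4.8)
The plan is to reduce the statement to the preceding Proposition by extracting, from the given (possibly redundant) family of hollow submodules, an irredundant subfamily whose sum is still $M$; the hypothesis $J(M)\ll M$ is precisely what makes this reduction work. So write $M=\sum_{i\in I}H_i$ with each $H_i$ hollow and $\ndim\,H_i\leq\alpha$.

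First I would discard the hollow pieces lying inside $J(M)$: if $I_0=\{i\in I: H_i\subseteq J(M)\}$, then $\sum_{i\in I_0}H_i\subseteq J(M)\ll M$, so from $M=\sum_{i\notin I_0}H_i+\sum_{i\in I_0}H_i$ we get $M=\sum_{i\notin I_0}H_i$. Hence we may assume $H_i\not\subseteq J(M)$ for every $i$. The next, and central, step is the local analysis of each such $H_i$: since $H_i\not\subseteq J(M)$ there is a maximal submodule $K$ of $M$ with $H_i\not\subseteq K$, and then $H_i\cap K$ is a maximal submodule of $H_i$, because $H_i/(H_i\cap K)\cong (H_i+K)/K=M/K$ is simple. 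As $H_i$ is hollow it admits no other maximal submodule, so $H_i$ is local with $J(H_i)=H_i\cap K\ll H_i$. Using the elementary fact that a small submodule of $H_i$ is small in $M$ (a short modularity argument), one obtains $J(H_i)\ll M$, hence $J(H_i)\subseteq J(M)$; conversely $H_i\cap J(M)$ is a proper, hence small, submodule of the hollow module $H_i$, so $H_i\cap J(M)\subseteq J(H_i)$. Therefore $J(H_i)=H_i\cap J(M)$, and the image $\overline{H_i}$ of $H_i$ in $\overline{M}:=M/J(M)$ is isomorphic to $H_i/J(H_i)$, which is simple.

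It follows that $\overline{M}=\sum_i\overline{H_i}$ is semisimple, so there is a subset $I'\subseteq I$ with $\overline{M}=\bigoplus_{i\in I'}\overline{H_i}$. Lifting along the projection $M\to\overline{M}$ and invoking $J(M)\ll M$ once more gives $M=\sum_{i\in I'}H_i$, and this representation is irredundant: if $M=\sum_{i\in I'\setminus\{i_0\}}H_i$ for some $i_0\in I'$, then passing to $\overline{M}$ would place the nonzero direct summand $\overline{H_{i_0}}$ inside the sum of the remaining $\overline{H_i}$, contradicting directness. Since each $H_i$ with $i\in I'$ still has Noetherian dimension $\leq\alpha$ and $M$ is weakly atomic by hypothesis, the preceding Proposition yields $\ndim\,M\leq\alpha$.

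The step I expect to be the main obstacle is the identification $J(H_i)=H_i\cap J(M)$ for the hollow summands — concretely, checking that smallness passes upward from a submodule to the ambient module, and that a hollow module possessing a maximal submodule must be local. Once these are in hand, the remainder is routine manipulation with $J(M)\ll M$ and the semisimplicity of $M/J(M)$. One should also dispatch the degenerate case $J(M)=M$: then $M\ll M$, so $M=0$ and there is nothing to prove; otherwise $M$ has maximal submodules and the argument above applies.
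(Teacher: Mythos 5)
Your proof is correct and follows essentially the same route as the paper: reduce to the preceding proposition on weakly atomic modules that are irredundant sums. The only difference is that the paper obtains the irredundant subfamily by citing \cite[41.5]{wis}, whereas you prove that extraction in full (localness of the hollow pieces not contained in $J(M)$, the identification $J(H_i)=H_i\cap J(M)$, semisimplicity of $M/J(M)$, and lifting via $J(M)\ll M$) --- all of which is accurate.
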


 \begin{proof}
 Since $J(M) \ll M$, by \cite[41.5]{wis} we may assume that $M$ is an irredundant sum of submodules. From the previous theorem, we conclude $\ndim\,M \leq \alpha$ and we are done.
 \end{proof}

  Now we are going to focus on the Question \ref{q2}. First we give an example in order to shows that the answer to \ref{q2} is not "Yes"  in general.

\begin{exam}
  Let  $R = \Pi_{i\in \mathbb N}F_i = F_1 \times F_2 \times \dots $, where  each $F_i$ is a filed. Then  $J(R) =0$ and clearly $R$ is not Noetherian. Note that for every maximal ideal $M \in max(R)$, we have $\ndim\,\frac{R}{M} = 0$. On the other hand,  $ \frac{R}{\cap max(R)} = \frac{R}{J(R)} \cong R $ is not Noetherian. This shows that the dual of Theorem \ref{grl} does not hold. Here, we emphasize that $R$ not only is non-Noetherian, but also has not Krull (resp., Noetherian) dimension, too. For this, note that it has no finite Goldie dimension, see \cite[Proposition 1.4]{go-ro}.
\end{exam}

We cite the following fact from \cite[Proposition 4.5]{go-ro} or \cite{len}.

\begin{lem}\label{l2}
    If an $R$-module $M$ has infinite chains
    $$M= A_0 \supseteq A_1 \supseteq A_2 \supseteq A_3\supseteq ...\quad
    \text{and} \quad 0 = B_0 \subseteq B_1 \subseteq B_2 \subseteq B_3 \subseteq \dots$$
    such that $A_n \cap B_{n+1} \nsubseteq A_{n+1} + B_n$, for each $n \geq 0$, then $M$ does not have Krull dimension.
\end{lem}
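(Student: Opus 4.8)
The plan is to prove the contrapositive by transfinite induction on $\kdim\,M$: a module with Krull dimension cannot carry such interlocking chains, so that, conversely, a module carrying them cannot have Krull dimension. It is convenient to induct on the slightly more flexible statement that drops the normalizations $A_0=M$, $B_0=0$ and only keeps the inequalities $A_n\cap B_{n+1}\nsubseteq A_{n+1}+B_n$ (equivalent to the stated one after replacing $M$ by $A_0$ and factoring out $B_0$). Since by \cite{le2} $M$ has Krull dimension exactly when it has Noetherian dimension, the Noetherian dimension of $M$ is also available as a tool. For the base case $\kdim\,M\le 0$, the module has the descending chain condition on submodules, so the chain $A_0\supseteq A_1\supseteq\cdots$ stabilizes, say $A_n=A_N$ for $n\ge N$; then $A_N\cap B_{N+1}\subseteq A_N=A_{N+1}\subseteq A_{N+1}+B_N$, contradicting $A_N\cap B_{N+1}\nsubseteq A_{N+1}+B_N$. (Dually, the case $\ndim\,M\le 0$ is settled by applying the same argument to the ascending chain $\{B_n\}$.)

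For the inductive step, suppose $\kdim\,M=\alpha\ge 1$, that chains $\{A_n\}$, $\{B_n\}$ as above exist, and that the assertion holds for every module of Krull dimension $<\alpha$. Feeding the descending chain $A_0\supseteq A_1\supseteq\cdots$ into the definition of Krull dimension, all but finitely many factors $A_n/A_{n+1}$ have $\kdim<\alpha$; after discarding initial terms (and applying the induction hypothesis directly to the submodule $A_N$, which still carries such chains, in case $\kdim\,A_N<\alpha$), I may assume $\kdim(A_n/A_{n+1})<\alpha$ for all $n$. The goal is then to manufacture, out of the two interlocking chains, a subfactor $Y$ of $M$ which again carries interlocking chains of the same type and has $\kdim\,Y<\alpha$; then the induction hypothesis applied to $Y$ forces $Y$ to have no Krull dimension, contradicting $\kdim\,Y<\alpha$ and finishing the proof. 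Equivalently, one may build a single descending chain $M=C_0\supseteq C_1\supseteq\cdots$ infinitely many of whose factors $C_k/C_{k+1}$ carry such chains; by induction each of these has Krull dimension not $<\alpha$, hence $=\alpha$, contradicting $\kdim\,M=\alpha$.

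Producing this smaller instance is the main obstacle, and it is \emph{non-local}: a subfactor built from finitely many consecutive levels of $\{A_n\}$ only meets a bounded segment of the ascending chain $\{B_n\}$, so the configuration forcing the contradiction must be spread over infinitely many levels of both chains at once. The mechanism is to exploit the inequalities $A_n\cap B_{n+1}\nsubseteq A_{n+1}+B_n$ together with repeated use of the modular law $P\cap(Q+R)=(P\cap Q)+R$ (valid for $R\subseteq P$) to transplant the whole staircase into a subfactor whose Krull dimension is bounded by the $\kdim(A_n/A_{n+1})<\alpha$; a useful preliminary observation is that any subsequence $\{A_{n_k}\}$, $\{B_{n_k}\}$ of the given chains is again of the required type, so the levels can be redistributed freely. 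The delicate point is to keep the Krull dimension of the transplanted configuration strictly below $\alpha$ while retaining the full interlocking structure: this is precisely where the asymmetry between the descending chain (whose factors the Krull dimension controls) and the ascending chain (which it does not) has to be absorbed — if necessary by simultaneously invoking the Noetherian dimension of $M$ — and it is the technical heart of the Gordon--Robson argument.
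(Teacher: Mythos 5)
The paper does not actually prove this lemma --- it quotes it from the literature (Gordon--Robson, Proposition 4.5, or Lenagan's note) --- so your argument has to stand entirely on its own, and it does not: the inductive step, which is the whole content of the lemma, is never carried out. Your base case $\kdim\,M\le 0$ is correct (the descending chain stabilizes at some $A_N$, whence $A_N\cap B_{N+1}\subseteq A_N=A_{N+1}\subseteq A_{N+1}+B_N$), and your preliminary reductions --- passing to subsequences, discarding the normalizations $A_0=M$ and $B_0=0$, arranging $\kdim\,(A_n/A_{n+1})<\alpha$ for all $n$ --- are legitimate (though the renormalization after factoring out $B_0$ needs a small modular-law check you omit). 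But at the decisive moment you only announce that one must ``manufacture a subfactor $Y$ with $\kdim\,Y<\alpha$ which again carries interlocking chains,'' and you yourself label this ``the main obstacle'' and ``the technical heart of the Gordon--Robson argument.'' Naming the obstacle is not overcoming it: the proof is missing exactly where the lemma is nontrivial.

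The gap is not merely expository. A descending chain all of whose factors $A_n/A_{n+1}$ have Krull dimension $<\alpha$ yields no single subfactor of dimension $<\alpha$ containing the whole configuration (compare $\mathbb Z\supseteq 2\mathbb Z\supseteq 4\mathbb Z\supseteq\cdots$, whose factors are simple while $\kdim\,\mathbb Z=1$), and the condition $A_n\cap B_{n+1}\nsubseteq A_{n+1}+B_n$ is, as you observe, spread over infinitely many levels of both chains at once; so it is genuinely unclear that a subfactor $Y$ of the kind you postulate exists, and the alternative you mention (a descending chain with infinitely many factors of dimension $\ge\alpha$) is likewise asserted rather than constructed. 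If you want to complete the argument, a concrete starting point is the modular-law identity $(A_n\cap B_{n+1})\cap(A_{n+1}+B_n)=(A_{n+1}\cap B_{n+1})+(A_n\cap B_n)$, which converts the hypothesis into the non-vanishing of $F_n=(A_n\cap B_{n+1})/\bigl((A_{n+1}\cap B_{n+1})+(A_n\cap B_n)\bigr)$, a subfactor of both $A_n/A_{n+1}$ and $B_{n+1}/B_n$; but turning this into the required contradiction is precisely the work done in Lemonnier's and Lenagan's proofs, and it is absent here.
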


 \begin{defi}
    An $R$-module $M$  is said to be an $AB5^*$ module, if for every submodule $B$ and inverse system $\{A_{i}\}_{i\in I}$ of submodules of $M$,
    $$B +\bigcap_{i\in I}A_{i}=\bigcap_{i\in I}(B+A_{i}).$$
\end{defi}

 Note that though most modules $M$ don't have this property, but fortunately Artinian modules and linearly compact modules satisfy the property of $AB5^{*}$, see \cite[29.8]{wis}. In what follows, we show that the dual of the Theorem \ref{grl},  holds for Noetherian dimension of  $AB5^*$ modules.

\begin{theo}\label{t2}
Let $M$ be an $AB5^*$ module with Noetherian dimension and let $\{N_i\}_{i \in \Delta}$  a family of submodules of $M$ such that $\ndim\,\frac{M}{N_i} \leq \alpha$, for each $i$. Then $\ndim\,\frac{M}{\cap N_i} \leq \alpha$.
\end{theo}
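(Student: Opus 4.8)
The plan is to reduce to the case of a finite, and then an infinite, descending chain obtained by intersecting the $N_i$, and to play off the $AB5^*$ hypothesis against the Noetherian-dimension bound, ultimately invoking Lemma \ref{l2} to derive a contradiction if $\ndim\,\frac{M}{\cap N_i} > \alpha$. First I would set $N = \bigcap_{i \in \Delta} N_i$ and suppose, for contradiction, that $\ndim\,\frac{M}{N} > \alpha$. Since $M$ has Noetherian dimension, so does $\frac{M}{N}$, and the idea is to extract from the family $\{N_i\}$ a suitably generic descending sequence. Replacing $\Delta$ by a countable subset if necessary (taking finite intersections $N_{i_1} \cap \dots \cap N_{i_n}$, each of which still has $\ndim\,\frac{M}{N_{i_1}\cap\dots\cap N_{i_n}} \leq \alpha$ by a straightforward induction using the short exact sequence $0 \to \frac{M}{N_{i_1}\cap\dots\cap N_{i_k}} \to \frac{M}{N_{i_1}\cap\dots\cap N_{i_{k-1}}} \oplus \frac{M}{N_{i_k}}$ and the fact that Noetherian dimension is subadditive on such sequences), one obtains an inverse system of submodules $A_0 \supseteq A_1 \supseteq A_2 \supseteq \dots$ with $A_n$ a finite intersection of the $N_i$, hence $\ndim\,\frac{M}{A_n} \leq \alpha$, and with $\bigcap_n A_n = N$.

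Next, because $\ndim\,\frac{M}{N} > \alpha$, there exists a strictly ascending chain $\frac{B_1}{N} \subsetneq \frac{B_2}{N} \subsetneq \dots$ in $\frac{M}{N}$ witnessing that its Noetherian dimension exceeds $\alpha$ — more precisely, one whose consecutive factors do not all eventually have Noetherian dimension $< \alpha$, i.e. a chain that is not "$\alpha$-fast". Set $B_0 = N$. The goal is to interleave the $A_n$'s and $B_n$'s so that the hypothesis of Lemma \ref{l2} holds: $A_n \cap B_{n+1} \nsubseteq A_{n+1} + B_n$ for each $n$. Here the $AB5^*$ condition enters decisively: since $\bigcap_n A_n = N \subseteq B_n$, we have $B_n = B_n + \bigcap_n A_n = \bigcap_n (B_n + A_n)$, so $B_n$ is the intersection of the descending chain $B_n + A_0 \supseteq B_n + A_1 \supseteq \dots$. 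Using this together with the fact that $\frac{M}{A_m}$ has Noetherian dimension $\leq \alpha$ while the chain $(B_k)$ is not $\alpha$-fast modulo $N$, one can, after passing to a subsequence of the $A$'s (relabelling), arrange that $B_{n+1} \not\subseteq A_{n+1} + B_n$ and indeed that $A_n \cap B_{n+1} \not\subseteq A_{n+1} + B_n$; intuitively, if this failed for all large indices one could run the chain $(B_k)$ inside the Noetherian-dimension-$\leq\alpha$ quotients $\frac{M}{A_m}$ and conclude $\ndim\,\frac{M}{N} \leq \alpha$, a contradiction.

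Once the two interleaved chains $A_0 \supseteq A_1 \supseteq \dots$ and $B_0 \subseteq B_1 \subseteq \dots$ with $A_n \cap B_{n+1} \nsubseteq A_{n+1} + B_n$ are in hand, Lemma \ref{l2} says $M$ has no Krull dimension, hence (by the equivalence cited from \cite{le2}) no Noetherian dimension, contradicting the hypothesis. Therefore $\ndim\,\frac{M}{N} \leq \alpha$.

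The main obstacle I expect is the third step: carefully choosing the subsequence of the $A_n$ and the chain $(B_n)$ so that the strict non-containment $A_n \cap B_{n+1} \nsubseteq A_{n+1} + B_n$ holds at \emph{every} stage, rather than merely infinitely often. This is exactly where one must exploit $AB5^*$ (to write each $B_n$ as $\bigcap_m (B_n + A_m)$) in tandem with a dimension-counting argument showing that, were the non-containment to fail cofinally, the ascending chain $(B_n)$ would be forced to have all but finitely many factors of Noetherian dimension $< \alpha$ — contradicting $\ndim\,\frac{M}{N} > \alpha$. The bookkeeping of simultaneously thinning two chains is the delicate part; everything else is routine diagram-chasing with short exact sequences and the standard behaviour of Noetherian dimension under subquotients.
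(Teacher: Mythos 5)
Your overall strategy --- interleave a descending chain of intersections with an ascending chain witnessing $\ndim\frac{M}{N}>\alpha$, use $AB5^*$ to convert $B_n=B_n+\bigcap A_\delta$ into $\bigcap(B_n+A_\delta)$, and invoke Lemma \ref{l2} to contradict the existence of Noetherian dimension --- is exactly the engine of the paper's proof, and your sketch of the ``dimension-counting'' step (if $A_n\cap B_{n+k}\subseteq B_n$ for all $k$, push the $B$-chain into $\frac{M}{A_n}$ via the modular law and contradict $\ndim\frac{M}{A_n}\leq\alpha$) is precisely what the paper does. The one genuine gap is your very first reduction: you assert that, after taking finite intersections, one may replace $\Delta$ by a countable subset and obtain a countable descending chain $A_0\supseteq A_1\supseteq\dots$ of finite intersections with $\bigcap_n A_n=N$. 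There is no reason a countable subfamily of $\{N_i\}$ should have the same intersection as the whole family, and you offer no argument; this is not a harmless normalization but the point where the actual difficulty of an arbitrary index set lives. (The finite-intersection step itself, via $\frac{M}{X\cap Y}\hookrightarrow\frac{M}{X}\oplus\frac{M}{Y}$, is fine.)

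The gap is repairable without changing your architecture: instead of fixing a countable chain in advance, work with the full downward-directed system $\mathcal F$ of all finite intersections of the $N_i$ (so $\bigcap\mathcal F=N$ and $\ndim\frac{M}{A}\leq\alpha$ for every $A\in\mathcal F$), and choose the $A_n$ \emph{adaptively} during the interleaving: having arranged $A_n\cap B_{n+1}\nsubseteq B_n=B_n+\bigcap_{A\in\mathcal F}A=\bigcap_{A\in\mathcal F}(B_n+A)$ by $AB5^*$, pick $A_{n+1}\in\mathcal F$ with $A_{n+1}\subseteq A_n$ and $A_n\cap B_{n+1}\nsubseteq A_{n+1}+B_n$. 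This selects only countably many members of $\mathcal F$, but only after the fact. The paper handles the same difficulty differently: it first builds a transfinite descending chain from $M$ down to $N$ with factors of dimension $\leq\alpha$ and runs a transfinite induction, deploying the $AB5^*$/Lemma \ref{l2} interleaving only at limit ordinals; your adaptive version, once corrected, actually avoids the transfinite induction altogether and is arguably cleaner. As written, however, the proposal rests on a false reduction and is incomplete.
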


\begin{proof}
    Set $N = \cap_{i \in \Delta} N_i$ and let $0 \neq \frac{A}{N}$ be a non-zero submodule of $\frac{M}{N}$, then $N \subsetneq A$, so there exists an indexed submodule, $N_1$ say, such that $A \nsubseteq N_1$. Thus $N \subseteq A \cap N_1 \subsetneq A$ and hence $0 \neq   \frac{A}{A \cap N_1} \cong \frac{A+N_1}{N_1} $
    and $\ndim\,\frac{A}{A \cap N_1} \leq \alpha$. This shows that each submodule $A$ of $M$, properly containing $N$, contains a proper submodule $A'$ containing $N$ and $\ndim\,\frac{A}{A'} \leq \alpha$. Thus, there exists a descending chain
    $M = A_0 \supsetneq A_1 \supsetneq A_2 \supsetneq \dots \supseteq N$ with ordinal indexes such that for each ordinal $\lambda$, $\ndim\,\frac{A_\lambda}{A_{\lambda + 1}} \leq \alpha$. For limit ordinal $\lambda$, define $A_\lambda = \cap_{\delta < \lambda} A_\delta$. Since $M$ is a set, there exists an ordinal number $\lambda$ such that $A_\lambda = N$. Now we proceed by induction on $\lambda$ to show that
    $$P(\lambda) : \ndim\,\frac{M}{A_\lambda} \leq \alpha.$$
    Clearly $P(0)$ and also  $P(\lambda)$, for every non-limit ordinal number $\lambda$, since\\
    $\ndim\,\frac{M}{A_\lambda} = sup\{\ndim\,\frac{M}{A_{\lambda -1}}, \ndim\,\frac{A_{\lambda-1}}{A_\lambda}\}$. Now let $\lambda$ be a limit ordinal number. If $\ndim\,\frac{M}{A_\lambda} \nleq \alpha$, then there exists an ascending chain
    $A_\lambda = N = B_0 \subsetneq B_1 \subsetneq B_2 \subsetneq \dots $
    such that $\ndim\,\frac{B_{i+1}}{B_i} \geq \alpha$.
    Note that $B_1 \nsubseteq N$ , so there exists $\delta < \lambda$ such that $B_1 \nsubseteq A_\delta$. Set $A_\delta = A_1$, then $B_1 \nsubseteq A_1$ and so $A_0 \cap B_1 \nsubseteq A_1 + B_0$. Suppose that $A_i \cap B_{i+1} \nsubseteq A_{i+1} + B_i$, for each $0 \leq i \leq n-1 < \lambda$. In particular, $A_{n-1} \cap B_n \nsubseteq A_n + B_{n-1}$. We seek $A_{n+1}$ and $B_{n+1}$ such that $A_n \cap B_{n+1} \nsubseteq A_{n+1} + B_n$. If for each $k > 0$, $A_n \cap B_{n+k} \subseteq B_n$,  then $A_n \cap B_{n+k} \subseteq B_{n+k-1}$. Then by modular laws, it follows that that $B_{n+k-1} = (A_n \cap B_{n+k}) + B_{n+k-1} = (A_n + B_{n+k-1}) \cap B_{n+k}$. Therefore
    $\frac{B_{n+k}}{B_{n+k-1}} \cong \frac{B_{n+k}}{B_{n+k} \cap (A_n + B_{n+k-1})} \cong  \frac{A_n + B_{n+k}}{A_n + B_{n+k-1}} \cong \frac{(A_n + B_{n+k})/A_n}{(A_n + B_{n+k-1})/A_n)}$.
    This shows that
     $\ndim\,\frac {(A_n + B_{n+k})/A_n}{(A_n + B_{n+k-1})/A_n)} \geq \alpha$. Hence $\frac{A_n+B_n}{A_n} \subseteq \frac{A_n + B_{n+1}}{A_n} \subseteq \frac{A_n + B_{n+2}}{A_n} \subseteq \dots $
    is an ascending chain in $\frac{M}{A_n}$ such that every quotient has Noetherian dimension at least $\alpha$, contradiction to $\ndim\,\frac{M}{A_n} \leq \alpha$. Consequently, there exists $k>0$ such that $A_n \cap B_{n+k} \nsubseteq B_n$. Set $B_{n+k} = B_{n+1}$ and conclude $A_n \cap B_{n+1} \nsubseteq B_n$.
    But $B_n = B_n + N = B_n + A_\lambda$, so
    $A_n \cap B_{n+1} \nsubseteq B_n + \cap_{\delta<\lambda}A_\delta$. Since $M$ has the property of $AB5^*$, we have $B_n + \cap_{\delta<\lambda}A_\delta = \cap_{\delta<\lambda}(B_n + A_\delta)$, consequently there exists $\delta > n$ that $A_n \cap B_{n+1} \nsubseteq A_\delta + B_n$. Set $A_\delta = A_{n+1}$ and conclude that $A_n \cap B_{n+1} \nsubseteq A_{n+1} + B_n$. Therefore there exist the cahins $$N = B_0 \subseteq B_1 \subseteq B_2 \subseteq  \dots \quad \text{and} \quad M= A_0 \supseteq A_1 \supseteq A_2 \supseteq \dots$$ such that $A_n \cap B_{n+1} \nsubseteq A_{n+1} + B_n$. The Lemma \ref{l2} shows that $M$ has no Krull dimension and so it has no Noetherian dimension too, a contradiction. Therefore $\ndim\,\frac{M}{N} \leq \alpha$ and this completes the proof.
\end{proof}

It is well-known that every Artinian module $M$ with $J(M)=0$ is also Noetherian. In the next corollary which is now immediate, we generalize this fact to $AB5^*$ modules.

\begin{coro}
    Let $M$ be an $AB5^*$ module with Noetherian dimension. Then $\ndim\,\frac{M}{J(M)} \leq 0$. Moreover, if $J(M) =0$ then $M$ is Noetherian.
\end{coro}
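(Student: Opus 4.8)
The plan is to derive everything from Theorem~\ref{t2} by specializing the family $\{N_i\}$ to the maximal submodules of $M$. First I would recall that for a module $N$ one has $\ndim\,N \leq 0$ precisely when $N$ is Noetherian, and that $\ndim\,N < 0$ (i.e. $\ndim\,N = -1$) precisely when $N = 0$, following the convention fixed in the introduction that the list of ordinals begins with $-1$. So the assertion $\ndim\,\frac{M}{J(M)} \leq 0$ is exactly the claim that $M/J(M)$ is Noetherian.

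For the main inequality I would argue as follows. If $M$ has no maximal submodules at all, then $J(M) = M$ by convention, $M/J(M) = 0$, and the inequality is trivial; so assume $\{M_i\}_{i\in\Delta}$ is the (nonempty) family of all maximal submodules of $M$, whence $J(M) = \bigcap_{i\in\Delta} M_i$. For each $i$ the quotient $M/M_i$ is simple, hence Noetherian, so $\ndim\,\frac{M}{M_i} \leq 0$. Now $M$ has Noetherian dimension by hypothesis and is an $AB5^*$ module, so Theorem~\ref{t2} applies with $\alpha = 0$ and yields $\ndim\,\frac{M}{\bigcap_{i} M_i} = \ndim\,\frac{M}{J(M)} \leq 0$, as desired.

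The "moreover" part is then immediate: if $J(M) = 0$, the inequality just proved reads $\ndim\,M \leq 0$, which by the remark above says exactly that $M$ is Noetherian.

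I do not expect a genuine obstacle here; the corollary is a direct application of Theorem~\ref{t2}, and the only points needing a word of care are the degenerate case where $M$ has no maximal submodules (handled by the convention $J(M) = M$) and the translation between the dimension inequality $\ndim \leq 0$ and Noetherianness. If one wanted to avoid invoking the convention in the edge case, one could instead note that a module with Noetherian dimension and no maximal submodules must be zero, but appealing to the convention is cleaner and matches the style of the paper.
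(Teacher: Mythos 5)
Your proof is correct and is exactly the argument the paper intends: the corollary is stated as an immediate consequence of Theorem~\ref{t2} applied to the family of maximal submodules, with simple quotients giving $\ndim\,\frac{M}{M_i}\leq 0$. Your extra care about the degenerate case $J(M)=M$ and the translation between $\ndim\leq 0$ and Noetherianness is a sensible addition, not a deviation.
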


\section{Classification of $\alpha$-short $AB5^*$ modules}

  The concepts of $\alpha$-short and $\alpha$-Krull modules have been studied and investigated respectively in \cite{d-k-sh} and \cite{d-h-sh}. It was proved that if $M$ is an $\alpha$-short module, then $M$ has Noetherian dimension and  either $\ndim\,M = \alpha$ or $\ndim\,M = \alpha + 1$. In particular, a semiprime (non-division) ring $R$ is $\alpha$-short if and only if $\ndim\,R = \alpha$, where $\alpha \geq 0$,  see \cite [Proposition 2.18]{d-k-sh}.
   Here, we should emphasize that  the statement of \cite[Proposition 2.18]{d-k-sh} is not valid for $\alpha = -1$ and it may simply be corrected as the statement of Proposition \ref{p1} in this paper, see also the comment preceding  \cite[Theorem 5.1]{ja-sh}.  However from these, it naturally raised the question of  characterization the $R$-modules $M$ that are $\alpha$-short if and only if $\ndim\,M = \alpha$. We recall that  \cite[Theorem 5.2]{ja-sh} partially has settled this question. It asserts that a semiprime $FQS$-module $M$ is $\alpha$-short if and
   only if $\ndim\,M = \alpha$, where $\alpha \geq 0$.  Here, by an $FQS$-module, we mean a  module $M$ which is finitely generated, quasi-projective and self-generator. Also, a module $M$ is called semiprime if $0$ is a semiprime submodule
   of $M$ in the sense of \cite{ja-sh}. Motivated by what stated and similar to our work in \cite{ja-sh-2}, in the continue of this paper, we will classify the $\alpha$-short modules in the category of $AB5^*$.\\

   We cite the following  definition and results from \cite{d-k-sh}.

\begin{defi}
     An $R$-module $M$ is called to
    be $\alpha$-short if for every submodule $N$ of $M$ either $\ndim\,N \leq \alpha$ or
    $\ndim\, \frac {M}{N} \leq \alpha $ and $\alpha$ is the least ordinal with this property.
\end{defi}

 It is easy to see that $(-1)$-short modules are just simple modules and $0$-short modules are just short modules, in the sense of \cite{b-s}.

 \begin{lem}\label{l3}
    If $M$ is an $\alpha$-short $R$-module, then any submodule and any factor module of $M$ is $\beta$-short for some ordinal $\beta \leq \alpha$.
 \end{lem}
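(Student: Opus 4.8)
The plan is to split the statement into its two halves --- submodules and factor modules --- and, in each case, to check directly that the very ordinal $\alpha$ already witnesses the defining dichotomy of an $\alpha$-short module \emph{relative to the smaller object}; the minimality clause in the notion of $\beta$-shortness then comes for free, because the class of ordinals satisfying the dichotomy is upward closed, hence has a least element, which is necessarily $\leq \alpha$. The only external input needed is the standard identity $\ndim\,M = \sup\{\ndim\,N,\ \ndim\,\frac{M}{N}\}$, valid whenever $M$ has Noetherian dimension; in particular $\ndim$ does not increase on passing to a submodule or to a factor module. Since $M$ is $\alpha$-short it has Noetherian dimension, so all its submodules and factor modules do too, and every $\ndim$ written below is defined.

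First I would treat a submodule $L \leq M$. Let $N'$ be any submodule of $L$. Then $N'$ is a submodule of $M$, so by $\alpha$-shortness of $M$ either $\ndim\,N' \leq \alpha$ or $\ndim\,\frac{M}{N'} \leq \alpha$. In the former case there is nothing to do; in the latter, $\frac{L}{N'}$ is (isomorphic to) a submodule of $\frac{M}{N'}$, so $\ndim\,\frac{L}{N'} \leq \ndim\,\frac{M}{N'} \leq \alpha$. Thus for every submodule $N'$ of $L$ one has $\ndim\,N' \leq \alpha$ or $\ndim\,\frac{L}{N'} \leq \alpha$; the class of ordinals with this property for $L$ is non-empty, so its least element $\beta$ exists and satisfies $\beta \leq \alpha$, i.e. $L$ is $\beta$-short.

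Next I would treat a factor module $\frac{M}{K}$. Its submodules are exactly the modules $\frac{N'}{K}$ with $K \leq N' \leq M$. By $\alpha$-shortness of $M$, either $\ndim\,N' \leq \alpha$ or $\ndim\,\frac{M}{N'} \leq \alpha$. If $\ndim\,\frac{M}{N'} \leq \alpha$, the isomorphism $(M/K)/(N'/K) \cong M/N'$ gives $\ndim\,(M/K)/(N'/K) \leq \alpha$. If instead $\ndim\,N' \leq \alpha$, then $\frac{N'}{K}$ is a factor module of $N'$, whence $\ndim\,\frac{N'}{K} \leq \ndim\,N' \leq \alpha$. So every submodule of $\frac{M}{K}$ obeys the $\alpha$-short dichotomy, and, exactly as before, the least ordinal $\beta \leq \alpha$ witnessing it makes $\frac{M}{K}$ a $\beta$-short module.

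I do not expect a genuine obstacle here; the argument is essentially bookkeeping. The two points that need a moment's care are: (i) producing the ordinal $\beta$ --- one only needs to exhibit \emph{one} witnessing ordinal (namely $\alpha$), after which well-ordering of the ordinals supplies the least one automatically; and (ii) correctly testing the dichotomy against the submodules/quotients of the smaller object rather than those of $M$, which is precisely where monotonicity of $\ndim$ under submodules and factor modules is invoked.
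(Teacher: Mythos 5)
Your argument is correct: exhibiting $\alpha$ as one ordinal witnessing the dichotomy for $L$ (resp.\ $M/K$) via monotonicity of $\ndim$ under submodules and quotients and the isomorphism $(M/K)/(N'/K)\cong M/N'$, and then taking the least such ordinal, is exactly the standard proof. The paper itself gives no proof of this lemma --- it is quoted from the reference \cite{d-k-sh} --- and your argument matches the one given there, so there is nothing further to compare.
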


\begin{theo}\label{t3}
    If $M$ is an $\alpha$-short $R$-module, then either $\ndim\,M =\alpha$ or $\ndim\,M =\alpha+1$.
\end{theo}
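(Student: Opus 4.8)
The plan is to prove the two inequalities $\ndim\,M\le\alpha+1$ and $\ndim\,M\ge\alpha$; since no ordinal lies strictly between $\alpha$ and $\alpha+1$, together they give $\ndim\,M=\alpha$ or $\ndim\,M=\alpha+1$. The first inequality will in particular re-establish that an $\alpha$-short module has Noetherian dimension, so nothing about the existence of $\ndim\,M$ need be assumed in advance.

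For the upper bound I would verify directly, from the recursive definition of Noetherian dimension, that every ascending chain $N_0\subseteq N_1\subseteq N_2\subseteq\cdots$ of submodules of $M$ has all but finitely many factors $N_{i+1}/N_i$ of Noetherian dimension $\le\alpha$; this is precisely the assertion $\ndim\,M\le\alpha+1$. Fix such a chain and apply the $\alpha$-short hypothesis to each submodule $N_j$. Case one: $\ndim\,\frac{M}{N_k}\le\alpha$ for some $k\ge1$. Then for every $i\ge k$ the factor $N_{i+1}/N_i\cong\frac{N_{i+1}/N_k}{N_i/N_k}$ is a subquotient of $\frac{M}{N_k}$, whence $\ndim\,\frac{N_{i+1}}{N_i}\le\ndim\,\frac{M}{N_k}\le\alpha$, so the chain satisfies the requirement. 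Case two: $\ndim\,\frac{M}{N_j}\nleq\alpha$ for every $j\ge1$. Then $\alpha$-shortness forces $\ndim\,N_j\le\alpha$ for every $j\ge1$, so each factor $N_{i+1}/N_i$ is a homomorphic image of $N_{i+1}$ and $\ndim\,N_{i+1}\le\alpha$ because $i+1\ge1$, giving $\ndim\,\frac{N_{i+1}}{N_i}\le\alpha$. In both cases the chain behaves as needed, so $\ndim\,M\le\alpha+1$ and, in particular, $M$ has Noetherian dimension.

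For the lower bound I would invoke the minimality clause in the definition of $\alpha$-shortness. Write $\ndim\,M=\gamma$, which is legitimate by the previous step. Every submodule $N$ of $M$ satisfies $\ndim\,N\le\ndim\,M=\gamma$, so the first alternative of the $\alpha$-short condition holds for $N$ with $\gamma$ in place of $\alpha$; thus $\gamma$ is an ordinal witnessing the defining property, and the minimality of $\alpha$ yields $\alpha\le\gamma=\ndim\,M$. Combining the two bounds completes the proof.

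The argument is short, and the one point that repays care is the dichotomy used in the upper bound: either some $\frac{M}{N_k}$ has Noetherian dimension $\le\alpha$, or every $N_j$ does. This is exhaustive precisely because $\alpha$-shortness supplies one of the two alternatives for each $N_j$; once the dichotomy is in hand, only the routine monotonicity of Noetherian dimension under passage to submodules and quotients is needed to bound the successive factors.
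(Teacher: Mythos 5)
The paper does not actually prove this statement: it is quoted from \cite{d-k-sh} (the text ``We cite the following definition and results from \cite{d-k-sh}'' immediately precedes it), so there is no in-paper proof to compare against. Your argument is correct and complete: the dichotomy over the chain is exhaustive, in either case all but finitely many factors $N_{i+1}/N_i$ are subquotients of a module of Noetherian dimension $\leq\alpha$ (which is exactly the recursive criterion for $\ndim\,M\leq\alpha+1$), and the lower bound $\alpha\leq\ndim\,M$ follows from the minimality clause since $\gamma=\ndim\,M$ always witnesses the defining property through its first disjunct. This is the standard argument for this fact and, as far as one can tell, essentially the one in the cited source.
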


\begin{theo}\label{t4}
    Let $N$ be a submodule of an $R$-module $M$.
    \begin{enumerate}
        \item  If $N$ is $\alpha$-short and $\ndim\,\frac{M}{N}\leq\alpha$, then $M$ is $\alpha$-short.
        \item If $\frac{M}{N}$ is $\alpha$-short and $\ndim\,N\leq\alpha$, then $M$ is $\alpha$-short.
    \end{enumerate}
\end{theo}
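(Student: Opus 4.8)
The plan is to verify directly that $M$ satisfies the defining condition of an $\alpha$-short module, namely that every submodule $L$ of $M$ has $\ndim\,L\le\alpha$ or $\ndim\,\frac{M}{L}\le\alpha$, and then to argue separately that $\alpha$ is the least ordinal with this property. For the first task the main tool is the standard behaviour of Noetherian dimension on a short exact sequence $0\to A\to B\to C\to 0$, namely $\ndim\,B=\sup\{\ndim\,A,\ndim\,C\}$, which I would apply to $L\cap N\subseteq L$ (using $\frac{L}{L\cap N}\cong\frac{L+N}{N}$) and to $\frac{N+L}{L}\subseteq\frac{M}{L}$ (using $\frac{N+L}{L}\cong\frac{N}{L\cap N}$ and $\frac{M}{N+L}$ a homomorphic image of $\frac{M}{N}$). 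For minimality I would use Lemma \ref{l3} together with the observation that the least ordinal with the short property is unique: if $M$ were $\beta$-short with $\beta<\alpha$, then in (1) its submodule $N$, and in (2) its factor $\frac{M}{N}$, would be $\gamma$-short for some $\gamma\le\beta<\alpha$, contradicting that it is $\alpha$-short; combined with the fact that $M$ already satisfies the $\alpha$-short inequality, this forces $M$ to be exactly $\alpha$-short.

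For part (1), fix a submodule $L$ of $M$ and look at $L\cap N\subseteq N$. Since $N$ is $\alpha$-short, either $\ndim\,(L\cap N)\le\alpha$ or $\ndim\,\frac{N}{L\cap N}\le\alpha$. In the first case, $\frac{L}{L\cap N}\cong\frac{L+N}{N}$ embeds in $\frac{M}{N}$, so $\ndim\,\frac{L}{L\cap N}\le\alpha$ and hence $\ndim\,L=\sup\{\ndim\,(L\cap N),\ndim\,\frac{L}{L\cap N}\}\le\alpha$. In the second case, $\frac{N}{L\cap N}\cong\frac{N+L}{L}$ is a submodule of $\frac{M}{L}$ with factor $\frac{M}{N+L}$, a homomorphic image of $\frac{M}{N}$; both have Noetherian dimension $\le\alpha$, so $\ndim\,\frac{M}{L}=\sup\{\ndim\,\frac{N+L}{L},\ndim\,\frac{M}{N+L}\}\le\alpha$. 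Thus the $\alpha$-short inequality holds for every $L$, and the minimality remark finishes (1).

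Part (2) is handled by the dual bookkeeping, intersecting the other way. Fix $L\subseteq M$ and consider $\frac{L+N}{N}\subseteq\frac{M}{N}$; since $\frac{M}{N}$ is $\alpha$-short, either $\ndim\,\frac{L+N}{N}\le\alpha$ or $\ndim\,\frac{M}{L+N}\le\alpha$. In the first case $\frac{L+N}{N}\cong\frac{L}{L\cap N}$ and $\ndim\,(L\cap N)\le\ndim\,N\le\alpha$, so $\ndim\,L\le\alpha$; in the second case $\frac{L+N}{L}\cong\frac{N}{L\cap N}$ is a homomorphic image of $N$, hence has dimension $\le\alpha$, and together with $\ndim\,\frac{M}{L+N}\le\alpha$ this gives $\ndim\,\frac{M}{L}\le\alpha$. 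So again $M$ satisfies the $\alpha$-short inequality, and Lemma \ref{l3} plus uniqueness of the short ordinal gives minimality. I expect no real obstacle here: the only delicate points are tracking which of the two alternatives occurs in each case and the routine use of the sup-formula across short exact sequences; since (1) and (2) are genuinely dual, once (1) is written out (2) follows by the evident exchange of submodules and factor modules.
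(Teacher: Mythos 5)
Your proof is correct: the case analysis on $L\cap N\subseteq N$ (resp.\ $\frac{L+N}{N}\subseteq\frac{M}{N}$) together with the sup-formula for Noetherian dimension on short exact sequences establishes the $\alpha$-short inequality for every $L\subseteq M$, and the minimality argument via Lemma \ref{l3} is sound. Note that the paper itself states Theorem \ref{t4} without proof, citing \cite{d-k-sh}; your argument is the standard one and there is nothing in the paper to compare it against.
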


The following result is the corrected statement of \cite [Proposition 2.18]{d-k-sh} that we mentioned before.

\begin{pro}\label{p1}
    A ring $R$ is $\alpha$-short if and only if it is a division ring, where $\alpha = -1$ and when $R$ is a semiprime non-division ring, it is $\alpha$-short if and only if
    $\ndim\, R = \alpha$, where $ \alpha \geq 0$.
\end{pro}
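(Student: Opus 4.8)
The plan is to handle the two claims separately, since the case $\alpha=-1$ is elementary and the case $\alpha\geq 0$ is where the real content (and the correction to \cite[Proposition 2.18]{d-k-sh}) lies. For the first claim, I would start from the observation made right after Lemma \ref{l3}, namely that $(-1)$-short modules are exactly the simple modules. Applying this to $M=R_R$: $R$ is $(-1)$-short if and only if $R_R$ is a simple module, which is equivalent to $R$ being a division ring. This direction needs only the definition and the already-recorded identification of $(-1)$-short modules, so it is immediate.

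For the second claim, assume $\alpha\geq 0$ and that $R$ is a semiprime non-division ring. First suppose $R$ is $\alpha$-short; then by Theorem \ref{t3}, $\ndim\,R$ is either $\alpha$ or $\alpha+1$, and I must rule out $\ndim\,R=\alpha+1$. This is where semiprimeness enters: in a semiprime ring, if $I$ is a right ideal then $I\cap r.\mathrm{ann}(I)=0$, so one can produce, inside $R$, a submodule isomorphic to a copy of a factor of $R$, forcing $\ndim\,I$ and $\ndim\,R/I$ to interact; more precisely, I expect to use that for a semiprime ring any nonzero ideal contains a nonzero ideal $I$ with $I\oplus r.\mathrm{ann}(I)$ essential, and then an $\alpha$-shortness argument on such a decomposition (together with the fact that the Noetherian dimension of a module equals the supremum over an essential direct sum of the summands' dimensions) pins $\ndim\,R$ down to $\alpha$ rather than $\alpha+1$. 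Conversely, suppose $\ndim\,R=\alpha$ with $\alpha\geq 0$ and $R$ semiprime non-division. Then $R$ is certainly $\beta$-short for $\beta=\ndim\,R=\alpha$ in the weak sense that every submodule $N$ satisfies $\ndim\,N\leq\alpha$ trivially (since $\ndim\,N\leq\ndim\,R=\alpha$); the only thing to check is minimality of $\alpha$, i.e.\ that $R$ is not $(\alpha-1)$-short (when $\alpha\geq 1$) and not $(-1)$-short (the non-division hypothesis handles $\alpha=0$). Non-minimality would mean every right ideal $N$ has $\ndim\,N\leq\alpha-1$ or $\ndim\,R/N\leq\alpha-1$; choosing $N=I$ with $I\oplus r.\mathrm{ann}(I)$ essential as above and using semiprimeness to get $\ndim\,I=\ndim\,R/r.\mathrm{ann}(I)$-type identities, one derives $\ndim\,R\leq\alpha-1$, a contradiction.

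The main obstacle I anticipate is the careful bookkeeping in the semiprime case: one needs the structural fact that a semiprime ring has "enough" complemented ideals (the annihilator decomposition $I\oplus r.\mathrm{ann}(I)$ being essential) and the fact that $\ndim$ of an essential extension equals $\ndim$ of the essential submodule, and then one must combine these with the $\alpha$-short hypothesis applied to exactly the right ideal to squeeze the dimension from $\alpha+1$ down to $\alpha$ (respectively to rule out $(\alpha-1)$-shortness). Everything else — the $\alpha=-1$ case, the reduction via Theorem \ref{t3}, the trivial bound $\ndim\,N\leq\ndim\,R$ — is routine. I would present the argument by first disposing of $\alpha=-1$, then proving "$R$ $\alpha$-short $\Rightarrow\ndim\,R=\alpha$" using Theorem \ref{t3} plus the semiprime decomposition, and finally "$\ndim\,R=\alpha\Rightarrow R$ $\alpha$-short" by verifying the defining inequality trivially and minimality by the same decomposition, so that the semiprime lemma is invoked once and reused.
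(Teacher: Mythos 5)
Your handling of the case $\alpha=-1$ is correct and coincides with what the paper does: that assertion ``follows from the definition,'' since $(-1)$-short modules are exactly the simple modules and $R_R$ is simple if and only if $R$ is a division ring. For the case $\alpha\geq 0$, however, the paper supplies no argument at all --- it states that the proof ``is exactly the proof of \cite[Proposition 2.18]{d-k-sh}, without even changing a single word,'' so the whole content is delegated to that reference. You are therefore attempting to reconstruct a proof the paper itself omits, and your reconstruction has genuine gaps.

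Concretely: (i) you invoke ``the fact that the Noetherian dimension of a module equals the supremum over an essential direct sum of the summands' dimensions,'' i.e.\ that $\ndim\,M=\ndim\,E$ for $E$ essential in $M$. This is false in general --- Example \ref{zp} of this very paper is a counterexample, since $H_1$ is essential in $\mathbb Z_{p^\infty}$ yet $\ndim\,H_1=0<1=\ndim\,\mathbb Z_{p^\infty}$ --- so it would have to be established specifically for semiprime rings with Noetherian dimension, which is itself the nontrivial point. (ii) The ``$\ndim\,I=\ndim\,R/r.\mathrm{ann}(I)$-type identities'' are asserted, not proved; the natural route to them (writing $I=\sum_{a\in I}aR$ with $aR\cong R/r.\mathrm{ann}(a)$ and summing) needs the GRL theorem for Noetherian dimension, which Section 2 of this paper shows can fail. (iii) While $I\cap r.\mathrm{ann}(I)=0$ does hold for right ideals of a semiprime ring, the essentiality of $I\oplus r.\mathrm{ann}(I)$ holds for \emph{two-sided} ideals (for $I=e_{11}R$ in $M_2(k)$ one has $r.\mathrm{ann}(I)=0$ and $I$ is not essential), whereas $\alpha$-shortness quantifies over right ideals; the bridge between the two is missing. (iv) The decisive steps in both directions are left as ``one derives \dots a contradiction.'' Finally, your converse direction can be shortened so as to avoid the semiprime decomposition entirely: if $\ndim\,R=\alpha$ the defining disjunction holds trivially at $\alpha$, so $R$ is $\beta$-short for some $\beta\leq\alpha$; Theorem \ref{t3} forces $\alpha\leq\beta+1$, hence $\beta=\alpha$ or $\beta=\alpha-1$, and the latter is excluded by the forward implication when $\alpha\geq 1$ and by the non-division hypothesis when $\alpha=0$.
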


 \begin{proof}
 The proof of the first part follows from the definition and that of the second part is exactly the proof of \cite[Proposition 2.18]{d-k-sh}, without even changing a single word.
\end{proof}

\begin{defi}
    Let $M$ be an $R$-module and $\alpha$ an ordinal number. We set
    $$\mathcal X_\alpha(M) = \{A \subseteq M : \ndim\,\frac{M}{A} \leq \alpha\}, \quad X_\alpha(M) = \bigcap \mathcal X_\alpha(M)$$
    $$\mathcal Y_\alpha(M) = \{B \subseteq M:  \ndim\,B > \alpha\}, \quad Y_\alpha(M) = \bigcap \mathcal Y_\alpha(M).$$
\end{defi}

  Note that in case $\ndim\,M = \alpha$, we have  $X_\alpha(M) = 0$ and $Y_\alpha(M) = M$ and if  $M$ is an $AB5^*$ module, then it follows by Theorem \ref{t2} that $\ndim\,\frac{M}{X_\alpha(M)} \leq \alpha$. The next result is now immediate.

\begin{lem}
    Let $M$ be an $AB5^*$ module.  If $\ndim\,M > \alpha$, then $ Y_\alpha(M) \subseteq X_\alpha(M)$.
\end{lem}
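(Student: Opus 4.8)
The claim is that if $M$ is an $AB5^*$ module with $\ndim\,M > \alpha$, then $Y_\alpha(M) \subseteq X_\alpha(M)$, where $Y_\alpha(M) = \bigcap\{B \subseteq M : \ndim\,B > \alpha\}$ and $X_\alpha(M) = \bigcap\{A \subseteq M : \ndim\,\frac{M}{A} \leq \alpha\}$. The plan is to show the contrapositive-flavored containment directly: I would pick an arbitrary $A \in \mathcal{X}_\alpha(M)$, i.e. a submodule with $\ndim\,\frac{M}{A} \leq \alpha$, and prove that $Y_\alpha(M) \subseteq A$; intersecting over all such $A$ then gives $Y_\alpha(M) \subseteq X_\alpha(M)$.

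So fix $A$ with $\ndim\,\frac{M}{A} \leq \alpha$. If $A = M$ there is nothing to prove, so assume $A \subsetneq M$. The key observation is that $A$ cannot be "too small" in the ordered sense: since $\ndim\,M > \alpha \geq \ndim\,\frac{M}{A}$, the exact sequence $0 \to A \to M \to \frac{M}{A} \to 0$ together with $\ndim\,M = \sup\{\ndim\,A, \ndim\,\frac{M}{A}\}$ forces $\ndim\,A = \ndim\,M > \alpha$. Hence $A$ itself belongs to $\mathcal{Y}_\alpha(M)$, and therefore $Y_\alpha(M) = \bigcap \mathcal{Y}_\alpha(M) \subseteq A$ simply because $A$ is one of the sets being intersected. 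Since this holds for every $A \in \mathcal{X}_\alpha(M)$, we get $Y_\alpha(M) \subseteq \bigcap \mathcal{X}_\alpha(M) = X_\alpha(M)$.

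I expect the main subtlety to be essentially bookkeeping rather than a genuine obstacle: one must be careful that $\mathcal{X}_\alpha(M)$ is nonempty (which it is, since $M \in \mathcal{X}_\alpha(M)$ as $\ndim\,0 = -1 \leq \alpha$), and that the standard formula $\ndim\,M = \sup\{\ndim\,A, \ndim\,\frac{M}{A}\}$ for a submodule $A$ is available — this is a basic property of Noetherian dimension and is used repeatedly in the preceding proofs, so I would simply invoke it. Notably, the $AB5^*$ hypothesis is not actually needed for this containment; it is retained here only because the definitions of $X_\alpha$ and $Y_\alpha$ and the preceding remark (that $\ndim\,\frac{M}{X_\alpha(M)} \leq \alpha$, which does use $AB5^*$ via Theorem \ref{t2}) are stated in that context. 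The whole argument is therefore short: reduce to a single $A$, use additivity of Noetherian dimension on short exact sequences to conclude $\ndim\,A > \alpha$, and observe that membership in $\mathcal{Y}_\alpha(M)$ makes $A$ a superset of the intersection $Y_\alpha(M)$.
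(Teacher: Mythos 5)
Your argument is correct and is essentially identical to the paper's: the paper likewise shows $\mathcal X_\alpha(M)\subseteq\mathcal Y_\alpha(M)$ by taking $A$ with $\ndim\,\frac{M}{A}\leq\alpha$ and using $\ndim\,M=\sup\{\ndim\,A,\ndim\,\frac{M}{A}\}$ to force $\ndim\,A>\alpha$, whence $Y_\alpha(M)=\bigcap\mathcal Y_\alpha(M)\subseteq\bigcap\mathcal X_\alpha(M)=X_\alpha(M)$. Your observation that the $AB5^*$ hypothesis is not actually used is also consistent with the paper's proof.
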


\begin{proof}
It sufficies to show that $\mathcal X_\alpha(M) \subseteq \mathcal Y_\alpha(M)$. To show this, let $A \in \mathcal X_\alpha(M)$. Since $\ndim\,\frac{M}{A} \leq \alpha$ and $\ndim\,M = sup\{\ndim\,A, \ndim\,\frac{M}{A}\} $, we have $\ndim\,A > \alpha$. This shows that $A \in \mathcal Y_\alpha(M)$. So $\mathcal X_\alpha(M) \subseteq \mathcal Y_\alpha(M)$ and we are done.
\end{proof}

In view of the previous lemma and Theorem \ref{t3}, we have the following.

\begin{coro}
    Let $M$ be an $\alpha$-short  $AB5^*$ module. If $ Y_\alpha(M) \nsubseteq X_\alpha(M)$ then $\ndim\,M = \alpha$.
\end{coro}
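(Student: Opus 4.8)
The plan is to argue by contraposition, using only Theorem~\ref{t3} together with the lemma immediately preceding the statement. Since $M$ is assumed $\alpha$-short, Theorem~\ref{t3} pins down its Noetherian dimension to one of exactly two values: $\ndim\,M = \alpha$ or $\ndim\,M = \alpha+1$. So the negation of the desired conclusion $\ndim\,M = \alpha$ is simply $\ndim\,M = \alpha+1$, and in particular $\ndim\,M > \alpha$.

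Assuming $\ndim\,M > \alpha$, I would then invoke the previous lemma, whose hypotheses are precisely that $M$ is an $AB5^*$ module with $\ndim\,M > \alpha$; its conclusion is $Y_\alpha(M) \subseteq X_\alpha(M)$. This directly contradicts the standing hypothesis $Y_\alpha(M) \nsubseteq X_\alpha(M)$. Hence the case $\ndim\,M = \alpha+1$ is impossible, and Theorem~\ref{t3} leaves only $\ndim\,M = \alpha$, as required.

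There is no real obstacle here: the corollary is an immediate bookkeeping consequence of the two cited results, and the only thing to be careful about is to note explicitly that $\alpha$-shortness (via Theorem~\ref{t3}) is what rules out all dimension values other than $\alpha$ and $\alpha+1$, so that refuting $\alpha+1$ suffices. No use of the $AB5^*$ property is needed beyond what is already packaged inside the preceding lemma.

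\begin{proof}
Since $M$ is $\alpha$-short, Theorem~\ref{t3} gives $\ndim\,M = \alpha$ or $\ndim\,M = \alpha+1$. Suppose, towards a contradiction, that $\ndim\,M \neq \alpha$; then $\ndim\,M = \alpha+1 > \alpha$. As $M$ is an $AB5^*$ module with $\ndim\,M > \alpha$, the previous lemma yields $Y_\alpha(M) \subseteq X_\alpha(M)$, contradicting the hypothesis $Y_\alpha(M) \nsubseteq X_\alpha(M)$. Therefore $\ndim\,M = \alpha$.
\end{proof}
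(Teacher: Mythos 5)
Your proof is correct and is exactly the argument the paper intends: it derives the corollary from Theorem~\ref{t3} (which restricts $\ndim\,M$ to $\alpha$ or $\alpha+1$) together with the preceding lemma, used in contrapositive form. Nothing further is needed.
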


The following important result is our structure theorem for $\alpha$-short $AB5^*$ modules and plays the major role in achieving what we seek.

\begin{theo}\label{t5}
    Let $M$ be an $AB5^*$ module and $\alpha$ an ordinal number. The following statements are equivalent.

    \begin{enumerate}
        \item  $M$ is an $\alpha$-short module.
        \item  $M$ has a submodule $A$ such that $\ndim\,\frac{M}{A} \leq \alpha$ and $\ndim\,N \leq\alpha$, for any submodule $N\nsupseteq A$ and $\alpha$ is the least ordinal with this property.
    \end{enumerate}
\end{theo}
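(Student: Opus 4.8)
The plan is to reduce Theorem~\ref{t5} to an ordinal-by-ordinal statement from which the ``least ordinal'' clauses in (1) and (2) transfer automatically. Concretely, for a fixed ordinal $\gamma$ put $P_1(\gamma)$: ``for every submodule $N$ of $M$, $\ndim N \leq \gamma$ or $\ndim \frac{M}{N} \leq \gamma$'', and $P_2(\gamma)$: ``$M$ has a submodule $A$ with $\ndim \frac{M}{A} \leq \gamma$ and $\ndim N \leq \gamma$ for every submodule $N \nsupseteq A$''. I would prove $P_1(\gamma) \Leftrightarrow P_2(\gamma)$ for every $\gamma$; since (1) is ``$P_1(\alpha)$ holds and $\alpha$ is least such'' and (2) is ``$P_2(\alpha)$ holds and $\alpha$ is least such'', the equivalence of (1) and (2) is then immediate.

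The direction $P_2(\gamma) \Rightarrow P_1(\gamma)$ is routine and uses nothing about $AB5^*$: given $N$, if $N \nsupseteq A$ then $\ndim N \leq \gamma$ by assumption, while if $N \supseteq A$ then $\frac{M}{N}$ is a quotient of $\frac{M}{A}$, so $\ndim \frac{M}{N} \leq \ndim \frac{M}{A} \leq \gamma$. For $P_1(\gamma) \Rightarrow P_2(\gamma)$, I first observe that $P_1(\gamma)$ forces $M$ to be $\delta$-short for the least ordinal $\delta \leq \gamma$ satisfying $P_1$, hence $M$ has Noetherian dimension by Theorem~\ref{t3}; combined with the $AB5^*$ hypothesis, this makes Theorem~\ref{t2} applicable. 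I would then take $A := X_\gamma(M) = \bigcap \mathcal{X}_\gamma(M)$. The Note preceding the theorem (i.e.\ Theorem~\ref{t2} applied with $\gamma$ in place of $\alpha$) yields $\ndim \frac{M}{X_\gamma(M)} \leq \gamma$, which is the first requirement of $P_2(\gamma)$. For the second, let $N \nsupseteq A$; if $\ndim N > \gamma$ then $P_1(\gamma)$ forces $\ndim \frac{M}{N} \leq \gamma$, i.e.\ $N \in \mathcal{X}_\gamma(M)$, so $N \supseteq \bigcap \mathcal{X}_\gamma(M) = A$, contradicting $N \nsupseteq A$. Hence $\ndim N \leq \gamma$, and $P_2(\gamma)$ holds with this choice of $A$.

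The sole substantive ingredient is Theorem~\ref{t2}: it is exactly what guarantees that the intersection $X_\gamma(M)$ of all submodules $B$ with $\ndim \frac{M}{B} \leq \gamma$ again satisfies $\ndim \frac{M}{X_\gamma(M)} \leq \gamma$, and this is the only place the $AB5^*$ hypothesis is used. Everything else --- monotonicity of Noetherian dimension under quotients, the definition of $X_\gamma(M)$, and reading off the minimality clause from the fixed-$\gamma$ equivalence --- is bookkeeping. I expect the one point requiring a moment's care to be the bookkeeping around minimality, namely making sure that whenever Theorem~\ref{t2} is invoked the ambient module is already known to have Noetherian dimension; as noted above this is automatic from $P_1(\gamma)$ via Theorem~\ref{t3}.
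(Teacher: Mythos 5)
Your proposal is correct and follows essentially the same route as the paper: both directions hinge on taking $A = X_\alpha(M)$ and invoking Theorem~\ref{t2} to get $\ndim\,\frac{M}{X_\alpha(M)} \leq \alpha$, with the second condition of (2) coming from the observation that $\ndim\,\frac{M}{N}\leq\alpha$ would force $N \supseteq X_\alpha(M)$. Your version is somewhat tidier on two points the paper glosses over --- isolating the fixed-$\gamma$ equivalence so the minimality clauses transfer cleanly, and explicitly noting that $\alpha$-shortness gives $M$ Noetherian dimension (via Theorem~\ref{t3}) before Theorem~\ref{t2} is applied --- but the substance is identical.
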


\begin{proof}
    Let $M$ be $\alpha$-short  and $ A = X_\alpha(M)$. Then
    $\ndim\,\frac{M}{A} \leq \alpha$ by Theorem \ref{t2}. Now let $N$ be a submodule of $M$ such that $A \nsubseteq n$,
    then $\ndim\,\frac{M}{N} \nleq \alpha$ and since $M$ is $\alpha$-short, we have $\ndim\,N \leq \alpha$. In order to show that $\alpha$ is the least ordinal with this property, it sufficies to prove the converse. Conversely, let $M$ has a submodule $A$ with mentioned conditions. For any submodule $N$ of $M$, if $A\subseteq N$, then $\ndim\,\frac{M}{N} \leq  \ndim\,\frac{M}{A} \leq \alpha$ and if $A \nsubseteq N$, then $\ndim\,N \leq \alpha$. Since $\alpha$ is the least ordinal with this property, $M$ is $\alpha$-short.
\end{proof}

\begin{nota}
    We use the following notation in the continue of the paper.
\begin{enumerate}
    \item
    $\mathcal M$ = the set of all modules with Noetherian dimension.
    \item
    $\mathcal M_\alpha $ = the set of all $\alpha$-short modules in  $\mathcal M$.
    \item
    $\mathcal A_\alpha $ = the set of all $M \in \mathcal M_\alpha$ with  $\ndim\,M =\alpha$
    and $\mathcal A = \cup_{\alpha} \mathcal A_\alpha$.
    \item $\mathcal B_\alpha $ = The set of all $M \in \mathcal M_\alpha$ with $\ndim\,M =\alpha + 1$
    and  $\mathcal B = \cup_{\alpha} \mathcal B_\alpha$.
\end{enumerate}
\end{nota}

\begin{rem}
    It is easy to see that
    $\mathcal A_{\alpha} \cap \mathcal
    B_{\alpha}=\emptyset$ and $\mathcal A_\alpha \cup \mathcal
    B_{\alpha} =\mathcal M_{\alpha} $ and so $\{\mathcal A_{\alpha}
    ,\mathcal B_{\alpha} \}$ is a partition for $\mathcal M_{\alpha}$.
    Similarly, $\mathcal A \cap \mathcal B =\emptyset$ and $\mathcal M
    =\mathcal A \cup \mathcal B$ so $\{\mathcal A,\mathcal B\}$ is a
    partition for $\mathcal M$.
\end{rem}

\begin{theo}\label{t6}
    Let $M \in \mathcal {B_\alpha}$ and $N$ be a submodule of $M$.  Then either $N \in \mathcal{B_\alpha}$ or $\frac{M}{N} \in \mathcal{B_\alpha}$.
\end{theo}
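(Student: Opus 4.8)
The plan is to combine Lemma~\ref{l3}, which bounds the shortness of every submodule and every factor module of $M$ by $\alpha$, with Theorem~\ref{t3}, which pins the Noetherian dimension of any $\beta$-short module to either $\beta$ or $\beta+1$, and with the standard additivity $\ndim\,M=\sup\{\ndim\,N,\ \ndim\,\frac{M}{N}\}$ of the Noetherian dimension along $0\to N\to M\to\frac{M}{N}\to 0$ (already used repeatedly in the proof of Theorem~\ref{t2}). Notice that the $AB5^{*}$ hypothesis plays no role here; the result is purely a consequence of the $\alpha$-shortness machinery.

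First I would apply Lemma~\ref{l3} to obtain an ordinal $\beta\leq\alpha$ with $N$ being $\beta$-short and an ordinal $\gamma\leq\alpha$ with $\frac{M}{N}$ being $\gamma$-short; in particular both $N$ and $\frac{M}{N}$ have Noetherian dimension, so the only thing still to be verified for membership in $\mathcal{B_\alpha}$ is that the module in question is $\alpha$-short (not merely $\beta$- or $\gamma$-short) and has Noetherian dimension exactly $\alpha+1$.

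Next, since $M\in\mathcal{B_\alpha}$ forces $\ndim\,M=\alpha+1$, the additivity formula shows that at least one of $\ndim\,N$ and $\ndim\,\frac{M}{N}$ equals $\alpha+1$. Suppose $\ndim\,N=\alpha+1$ (the other case is handled symmetrically, with $\frac{M}{N}$ and $\gamma$ in place of $N$ and $\beta$). By Theorem~\ref{t3} applied to the $\beta$-short module $N$ we have $\ndim\,N\in\{\beta,\beta+1\}$; since $\ndim\,N=\alpha+1>\alpha\geq\beta$, the value $\beta$ is impossible, so $\ndim\,N=\beta+1$ and hence $\beta=\alpha$. Therefore $N$ is an $\alpha$-short module with $\ndim\,N=\alpha+1$, i.e.\ $N\in\mathcal{B_\alpha}$, as desired.

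I do not expect a genuine obstacle here: the statement is essentially bookkeeping with the two quoted theorems. The one point deserving a word of care is the claim that at least one of the two dimensions attains $\alpha+1$, which is exactly the $\sup$-formula; one may also remark in passing that, because $M$ is $\alpha$-short, $\ndim\,N$ and $\ndim\,\frac{M}{N}$ cannot both exceed $\alpha$, so in fact exactly one of them equals $\alpha+1$ while the other is $\leq\alpha$, although only the ``at least one'' assertion is needed for the conclusion.
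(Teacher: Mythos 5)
Your argument is correct and follows essentially the same route as the paper's: both invoke Lemma~\ref{l3} to get $\beta$-shortness with $\beta\leq\alpha$, use the $\sup$-formula to see that one of $\ndim\,N$, $\ndim\,\frac{M}{N}$ equals $\alpha+1$, and then apply Theorem~\ref{t3} to force $\beta=\alpha$. Your closing remark that exactly one of the two dimensions can reach $\alpha+1$ is in fact slightly more careful than the paper's third case, which is vacuous for an $\alpha$-short module.
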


\begin{proof}
    By the above notation, $M$ is $\alpha$-short and $\ndim\,M = \alpha
    +1$. We have the following cases.
    \begin{enumerate}
        \item If $\ndim\,N > \ndim\,\frac{M}{N}$, then $\ndim\,N = \ndim\,M = \alpha + 1$. Also $N$ is $\beta$-short for some $\beta \leq \alpha$. If $\beta < \alpha$,  by Theorem \ref{t3}, we have $\ndim\,N \leq \beta + 1 \leq \alpha$, a contradiction. Consequently,  $N$ is $\alpha$-short with $\ndim\,N = \alpha + 1$. Therefore $N \in \mathcal{B}_\alpha$.
        \item  If $\ndim\,\frac{M}{N} > \ndim\,N$, then $\ndim\,\frac{M}{N} = \ndim\,M = \alpha + 1$. Also $\frac{M}{N}$ is $\beta$-short for some $\beta \leq \alpha$. If $\beta < \alpha$, by Theorem \ref{t3}, we have $\kdim\,\frac{M}{N} \leq \beta + 1 \leq \alpha$, a contradiction. Consequently,  $\frac{M}{N}$ is $\alpha$-short with $\ndim\,\frac{M}{N} = \alpha + 1$. Therefore $\frac{M}{N} \in \mathcal{B}_\alpha$. At last,
        \item If $\ndim\,N = \ndim\,\frac{M}{N}$, then the same argument shows that both $N$ and $\frac{M}{N}$ belong to $\mathcal{B}_\alpha$. Hence, we are done.
    \end{enumerate}
\end{proof}

In view of the proof of Theorem \ref{t6} the next result is now immediate.

\begin{coro}\label{c1}
    Let $M\in \mathcal{B_\alpha}$ and $N$ be a submodule of $M$.  Then
    \begin{enumerate}
        \item  If $\ndim\,N > \ndim\,\frac{M}{N}$, then $N \in \mathcal{B_\alpha}$.
        \item  If $\ndim\,N < \ndim\,\frac{M}{N}$, then $\frac{M}{N}\in\mathcal{B_\alpha}$.
        \item  If $\ndim\,N = \ndim\,\frac{M}{N}$, then $N , \frac{M}{N} \in\mathcal{B_ \alpha}$.
    \end{enumerate}
\end{coro}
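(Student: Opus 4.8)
The plan is to observe that each of the three clauses of this corollary is precisely one of the three cases already disposed of inside the proof of Theorem \ref{t6}, so that the work amounts to isolating those cases and recording the conclusions. Fix $M \in \mathcal{B}_\alpha$, so that by the notation $M$ is $\alpha$-short and $\ndim\,M = \alpha+1$, and let $N$ be a submodule of $M$. The three ingredients I will use repeatedly are: the identity $\ndim\,M = \sup\{\ndim\,N, \ndim\,\tfrac{M}{N}\}$ (available since $M$ has Noetherian dimension); Lemma \ref{l3}, which guarantees that $N$ and $\tfrac{M}{N}$ are each $\beta$-short for some ordinal $\beta \leq \alpha$; and Theorem \ref{t3}, which forces a $\beta$-short module to have Noetherian dimension equal to $\beta$ or $\beta+1$.

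For clause (1), assume $\ndim\,N > \ndim\,\tfrac{M}{N}$. Then the supremum identity gives $\ndim\,N = \ndim\,M = \alpha+1$. By Lemma \ref{l3} the module $N$ is $\beta$-short for some $\beta \leq \alpha$; if $\beta < \alpha$ then Theorem \ref{t3} would yield $\ndim\,N \leq \beta+1 \leq \alpha$, contradicting $\ndim\,N = \alpha+1$. Hence $\beta = \alpha$, so $N$ is $\alpha$-short with $\ndim\,N = \alpha+1$, i.e. $N \in \mathcal{B}_\alpha$. Clause (2) is the mirror image: the hypothesis $\ndim\,\tfrac{M}{N} > \ndim\,N$ forces $\ndim\,\tfrac{M}{N} = \alpha+1$, and since $\tfrac{M}{N}$ is $\beta$-short for some $\beta \leq \alpha$ the very same use of Theorem \ref{t3} rules out $\beta < \alpha$, leaving $\tfrac{M}{N}$ $\alpha$-short of Noetherian dimension $\alpha+1$, hence in $\mathcal{B}_\alpha$. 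For clause (3), assume $\ndim\,N = \ndim\,\tfrac{M}{N}$; then both equal $\ndim\,M = \alpha+1$ by the supremum identity, and now the argument of clause (1) applied to $N$ and the argument of clause (2) applied to $\tfrac{M}{N}$ together show that each of $N$ and $\tfrac{M}{N}$ belongs to $\mathcal{B}_\alpha$.

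I do not expect a genuine obstacle: the substantive content was already carried out in the proof of Theorem \ref{t6}, and what remains is elementary bookkeeping. The only points deserving a moment's care are, first, to check in each clause that the stated inequality (or equality) between $\ndim\,N$ and $\ndim\,\tfrac{M}{N}$ is what pins the value $\alpha+1$ onto the correct one of the two modules via the supremum identity, and second, to note that clause (3) genuinely requires running the argument twice rather than once, so that both $N$ and $\tfrac{M}{N}$ are treated.
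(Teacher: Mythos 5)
Your proposal is correct and matches the paper exactly: the paper states that Corollary \ref{c1} is immediate from the proof of Theorem \ref{t6}, and your argument is precisely the case-by-case content of that proof (the supremum identity pinning $\alpha+1$ onto the right module, then Lemma \ref{l3} and Theorem \ref{t3} ruling out $\beta<\alpha$).
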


\begin{coro}\label{c2}
    Let $N$ be a submodule of $M$.
    If $N , \frac{M}{N} \in \mathcal{A}$, then $M \in \mathcal{A}$.
\end{coro}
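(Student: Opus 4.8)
The plan is to reduce the claim to the structure theorem (Theorem \ref{t5}) together with the dichotomy for $\alpha$-short modules (Theorem \ref{t3}) and the building-up result (Theorem \ref{t4}). Write $\beta = \ndim N$ and $\gamma = \ndim\,\frac{M}{N}$; since $N,\frac{M}{N}\in\mathcal A$, the module $N$ is $\beta$-short and $\frac{M}{N}$ is $\gamma$-short, and both lie in $\mathcal M$, so $\ndim M = \sup\{\beta,\gamma\}$ is an ordinal, i.e.\ $M\in\mathcal M$. Without loss of generality assume $\beta\le\gamma$, so $\ndim M=\gamma$. By hypothesis $\ndim N=\beta\le\gamma$, and $\frac{M}{N}$ is $\gamma$-short, so Theorem \ref{t4}(2) immediately gives that $M$ is $\gamma$-short. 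It then remains only to check that $\ndim M=\gamma$, i.e.\ that $M\in\mathcal A_\gamma$ rather than $\mathcal B_\gamma$; but we already observed $\ndim M=\sup\{\beta,\gamma\}=\gamma$, so $M\in\mathcal A$, as desired.

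The one point that needs a little care is the symmetric case $\gamma<\beta$, handled the same way with the roles of $N$ and $\frac{M}{N}$ interchanged: then $\ndim M=\beta$, $\ndim\,\frac{M}{N}=\gamma\le\beta$, $N$ is $\beta$-short, and Theorem \ref{t4}(1) yields that $M$ is $\beta$-short with $\ndim M=\beta$, so again $M\in\mathcal A$. The case $\beta=\gamma$ is covered by either application. Thus in all cases $M$ is $(\ndim M)$-short, i.e.\ $M\in\mathcal A$.

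I do not expect any serious obstacle here: the corollary is essentially a bookkeeping consequence of Theorem \ref{t4}, whose two halves are exactly tailored to the two cases $\ndim N\le\ndim\frac{M}{N}$ and $\ndim\frac{M}{N}\le\ndim N$. The only thing one must be slightly careful about is not to conflate \emph{$M$ is $(\ndim M)$-short} (which is what membership in $\mathcal A$ means) with \emph{$M$ is $\alpha$-short for the same $\alpha$ witnessing $N$ or $\frac{M}{N}$}; once one records that $\ndim M=\max\{\ndim N,\ndim\frac{M}{N}\}$ and that this same ordinal is the shortness index produced by Theorem \ref{t4}, the conclusion $M\in\mathcal A$ is forced. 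Note also that the $AB5^*$ hypothesis is not needed for this particular corollary — it is inherited from the ambient discussion but Theorem \ref{t4} is stated for arbitrary modules.
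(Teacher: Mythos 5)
Your proof is correct. The paper itself gives no explicit argument for Corollary \ref{c2}; from its placement, the intended derivation is by contraposition from Theorem \ref{t6}: if $M\notin\mathcal A$ then (since $N,\frac{M}{N}\in\mathcal M$ forces $M\in\mathcal M$ and $\{\mathcal A,\mathcal B\}$ partitions $\mathcal M$) $M\in\mathcal B_\alpha$ for some $\alpha$, whence Theorem \ref{t6} puts $N$ or $\frac{M}{N}$ in $\mathcal B_\alpha$, contradicting $\mathcal A\cap\mathcal B=\emptyset$. You instead argue directly from Theorem \ref{t4} together with the identity $\ndim\,M=\sup\{\ndim\,N,\ndim\,\frac{M}{N}\}$, splitting into the cases $\ndim\,N\leq\ndim\,\frac{M}{N}$ and the reverse, and correctly treating the two cases separately rather than by a false symmetry (the situation of submodule versus quotient is not symmetric, and you handle this properly by invoking the two halves of Theorem \ref{t4} respectively). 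Both routes are sound; yours is more self-contained in that it bypasses Theorem \ref{t6} and the partition bookkeeping entirely, and it makes explicit \emph{which} ordinal witnesses membership in $\mathcal A$, namely $\max\{\ndim\,N,\ndim\,\frac{M}{N}\}$. The one point worth flagging is that your argument leans on Theorem \ref{t4} yielding that $M$ is \emph{exactly} $\alpha$-short (not merely $\beta$-short for some $\beta\leq\alpha$); that is indeed how the paper states Theorem \ref{t4}, and it is justified via Lemma \ref{l3}, so no gap results. Your observation that the $AB5^*$ hypothesis plays no role here is also consistent with the paper, which states the corollary for arbitrary modules.
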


\begin{coro}\label{c3}
    If $M_1, M_2 ,. . . , M_n \in \mathcal{A}$, then $M_1\oplus M_2\oplus ...\oplus M_n \in \mathcal{A}$.
\end{coro}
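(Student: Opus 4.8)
The plan is to reduce everything to Corollary \ref{c2} and then run a straightforward induction on $n$. The case $n=1$ is vacuous. For $n=2$, set $M=M_1\oplus M_2$ and take the submodule $N=M_1$; then $N=M_1\in\mathcal A$ by hypothesis and $M/N\cong M_2\in\mathcal A$ by hypothesis, so Corollary \ref{c2} immediately yields $M\in\mathcal A$.

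For the inductive step, suppose the statement holds for $n-1$ summands, and let $M_1,\dots,M_n\in\mathcal A$. Write $M=M_1\oplus\cdots\oplus M_n=(M_1\oplus\cdots\oplus M_{n-1})\oplus M_n$ and put $N=M_1\oplus\cdots\oplus M_{n-1}$. By the induction hypothesis $N\in\mathcal A$, and $M/N\cong M_n\in\mathcal A$ by hypothesis; applying Corollary \ref{c2} once more gives $M\in\mathcal A$, completing the induction.

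I do not anticipate any real obstacle here: the only thing to keep in mind is that Corollary \ref{c2} is exactly tailored to this purpose (a submodule $N$ and quotient $M/N$ both lying in $\mathcal A$ forces $M\in\mathcal A$), and that membership in $\mathcal A$ carries with it the existence of Noetherian dimension, so the hypotheses of Corollary \ref{c2} are met at each stage of the induction. One could alternatively phrase the whole argument as a single application of Corollary \ref{c2} after telescoping, but the inductive formulation is cleanest.

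\begin{proof}
    We argue by induction on $n$, the case $n=1$ being trivial. Assume the claim for $n-1$. Given $M_1,\dots,M_n\in\mathcal A$, set $N=M_1\oplus\cdots\oplus M_{n-1}$, which lies in $\mathcal A$ by the induction hypothesis, and note that $\frac{M_1\oplus\cdots\oplus M_n}{N}\cong M_n\in\mathcal A$. By Corollary \ref{c2}, $M_1\oplus\cdots\oplus M_n\in\mathcal A$, as required.
\end{proof}
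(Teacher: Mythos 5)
Your proof is correct and follows exactly the route the paper intends: Corollary \ref{c2} applied inductively to the decomposition $N=M_1\oplus\cdots\oplus M_{n-1}$ with quotient $\cong M_n$. The paper gives no separate argument for this corollary (it is stated as immediate from Corollary \ref{c2}), so there is nothing to add.
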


\begin{rem}
    The converse of Corollary \ref{c3} is not true in general. For example, if $M = M_1 \oplus M_2$ where
    $M_1$ and $M_2$ are simple modules, then $M_1 , M_2 \in \mathcal{B}_{-1}$.  But $M \in \mathcal{M}$ and it is also a (non-simple) semisimple module. This shows that $ M\in\mathcal{A}$. Note that any semisimple module $M \in \mathcal M$ is both a short and Noetherian module.
\end{rem}

\begin{coro}\label{c4}
    If $M\in\mathcal{B_\alpha}$,  then $\frac{M}{E} \in\mathcal{B_\alpha}$ for some  essential submodule $E$ of $M$.
\end{coro}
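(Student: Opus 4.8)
The plan is to locate the required essential submodule among those submodules of $M$ which fail to contain $X_\alpha(M)$. Write $A=X_\alpha(M)$. Since $M$ is an $\alpha$-short $AB5^*$ module, Theorem~\ref{t2} gives $\ndim\,\frac{M}{A}\le\alpha$, so $A\in\mathcal X_\alpha(M)$; and as $A=\bigcap\mathcal X_\alpha(M)$, every submodule $N$ with $A\nsubseteq N$ satisfies $N\notin\mathcal X_\alpha(M)$, i.e.\ $\ndim\,\frac{M}{N}>\alpha$, and hence $\ndim\,N\le\alpha$ because $M$ is $\alpha$-short. (This is exactly the content of the structure Theorem~\ref{t5} for the distinguished submodule $A$.) Since $M\in\mathcal B_\alpha$ we have $\ndim\,M=\alpha+1$, and combining $\alpha+1=\ndim\,M=\sup\{\ndim\,A,\ndim\,\frac{M}{A}\}$ with $\ndim\,\frac{M}{A}\le\alpha$ forces $\ndim\,A=\alpha+1$. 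So $A$ has Noetherian dimension $\alpha+1$, while every proper submodule of $A$ has Noetherian dimension $\le\alpha$.

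The crux — and the step I expect to be the main obstacle — is to produce an essential submodule $E\subseteq M$ with $A\nsubseteq E$. I would argue by contradiction: suppose every essential submodule of $M$ contains $A$. Because $\alpha\ge 0$ we have $\ndim\,A\ge 1$, so $A$ is not simple and we may choose a submodule $A_1$ with $0\ne A_1\subsetneq A$. Pick $C\subseteq M$ maximal with respect to $C\cap A_1=0$; it is standard that then $A_1\oplus C$ is essential in $M$, so by our supposition $A\subseteq A_1\oplus C$. Since $A_1\subseteq A$, the modular law yields $A=A_1+(A\cap C)$ with $A_1\cap(A\cap C)\subseteq A_1\cap C=0$, hence $A=A_1\oplus(A\cap C)$. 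As $A_1\ne 0$, the summand $A\cap C$ is again a proper submodule of $A$, so both $A_1$ and $A\cap C$ have Noetherian dimension $\le\alpha$ by the previous paragraph, whence $\ndim\,A=\sup\{\ndim\,A_1,\ndim\,\frac{A}{A_1}\}=\sup\{\ndim\,A_1,\ndim\,(A\cap C)\}\le\alpha$, contradicting $\ndim\,A=\alpha+1$. Therefore some essential submodule $E$ of $M$ satisfies $A\nsubseteq E$.

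It remains to check that $\frac{M}{E}\in\mathcal B_\alpha$, which is now routine. From $A\nsubseteq E$ and the first paragraph, $\ndim\,\frac{M}{E}>\alpha$; on the other hand $\frac{M}{E}$ is a factor module of $M$, so $\ndim\,\frac{M}{E}\le\ndim\,M=\alpha+1$, and therefore $\ndim\,\frac{M}{E}=\alpha+1$. By Lemma~\ref{l3}, $\frac{M}{E}$ is $\beta$-short for some $\beta\le\alpha$, and Theorem~\ref{t3} gives $\alpha+1=\ndim\,\frac{M}{E}\le\beta+1$, forcing $\beta=\alpha$; thus $\frac{M}{E}$ is $\alpha$-short of Noetherian dimension $\alpha+1$, i.e.\ $\frac{M}{E}\in\mathcal B_\alpha$. (Note that $\alpha\ge 0$ is genuinely needed here: when $\alpha=-1$, $M$ is simple and $M$ itself is its only essential submodule, so the statement must be read for $\alpha\ge 0$.)
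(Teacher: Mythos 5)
Your proof is correct, but it takes a genuinely different route from the paper's. The paper disposes of the corollary in two lines: since $\ndim\,M=\alpha+1>0$, $M$ is not semisimple and therefore has proper essential submodules, and then it invokes the identity $\ndim\,M=\sup\{\ndim\,\frac{M}{E}: E \text{ essential in } M\}$ to produce an essential $E$ with $\ndim\,\frac{M}{E}=\alpha+1$; that $\frac{M}{E}$ is then $\alpha$-short is read off exactly as in your last paragraph (via Lemma \ref{l3} and Theorem \ref{t3}). In particular the paper's argument makes no use of the $AB5^*$ condition, consistent with the fact that the corollary, like Corollaries \ref{c1}--\ref{c3}, is stated for arbitrary modules in $\mathcal B_\alpha$. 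You instead construct the essential submodule by hand: you identify $A=X_\alpha(M)$ as an $(\alpha+1)$-atomic submodule --- and this is where Theorem \ref{t2}, hence the $AB5^*$ hypothesis, enters --- and then use a complement together with the modular law to show that some essential submodule fails to contain $A$, since otherwise $A$ would decompose as a direct sum of two proper submodules of dimension $\leq\alpha$. This is a clean and fully self-contained argument, and it has the merit of avoiding the supremum formula over essential submodules, which the paper uses without reference; the price is that your proof establishes the corollary only for $AB5^*$ modules, a strictly weaker statement than the one printed (though sufficient for all subsequent uses in the paper, which concern $AB5^*$ modules exclusively). Your remark that $\alpha\geq 0$ is genuinely needed --- the statement fails for simple modules, i.e.\ for $\alpha=-1$ --- applies equally to the paper's own proof, which tacitly assumes $\ndim\,M>0$.
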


\begin{proof}
    Since $ M \in \mathcal{B}_\alpha $,  $\ndim\,M = \alpha + 1 > 0 $, we conclude that $M$ is not semisimple (note, a semisimple modules with Noetherian dimension is both Artinian and Noetherian). Hence, $M$ has proper essential submodules. We have
    $\ndim\,M = sup\{\ndim\,\frac{M}{E} : E \quad \text{is an essential submodule of} \quad M\}$,
    so $\ndim\,\frac{M}{E}=\alpha+1$  for some essential submodule $E$ of $M$. By Theorem \ref{t4}, $\frac{M}{E}\in\mathcal{B_\alpha}$
\end{proof}

 Recall that by  an $\alpha$-atomic module we mean a non-zero module $M$ such that $\ndim\,M = \alpha$ and $\ndim\,N < \alpha$ for every proper submodule $N$ of $M$.

\begin{theo}\label{t7}
    Let $M$ be an $\alpha$-short $AB5^*$ module. The following statements are equivalent.
    \begin{enumerate}
        \item $M\in\mathcal{B_\alpha}$.
        \item  $X_\alpha(M)$ is $(\alpha+1)$-atomic.
    \end{enumerate}
\end{theo}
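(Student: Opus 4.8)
The plan is to take $A := X_\alpha(M)$ as the candidate for the $(\alpha+1)$-atomic submodule appearing in (2); both implications then reduce to pinning down $\ndim\,A$ via the supremum formula $\ndim\,M=\sup\{\ndim\,A,\ndim\,\frac{M}{A}\}$ together with Theorem \ref{t3}.

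First I would record the two properties of $A=X_\alpha(M)$ that are available because $M$ is $\alpha$-short and $AB5^*$: applying Theorem \ref{t2} to the family $\mathcal{X}_\alpha(M)$ gives $\ndim\,\frac{M}{A}\leq\alpha$, and, exactly as extracted in the proof of Theorem \ref{t5}, every submodule $N$ of $M$ with $N\nsupseteq A$ satisfies $\ndim\,N\leq\alpha$. These are the only inputs I expect to need.

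For $(1)\Rightarrow(2)$: assume $M\in\mathcal{B}_\alpha$, i.e. $\ndim\,M=\alpha+1$. From $\ndim\,M=\sup\{\ndim\,A,\ndim\,\frac{M}{A}\}$ and $\ndim\,\frac{M}{A}\leq\alpha$ we are forced to have $\ndim\,A=\alpha+1$. For any proper submodule $N\subsetneq A$ we have $A\nsubseteq N$, hence $\ndim\,N\leq\alpha<\ndim\,A$; thus $A=X_\alpha(M)$ is $(\alpha+1)$-atomic. For $(2)\Rightarrow(1)$: if $A=X_\alpha(M)$ is $(\alpha+1)$-atomic then $\ndim\,A=\alpha+1$, so $\ndim\,M\geq\alpha+1$ because $A\subseteq M$; since $M$ is $\alpha$-short, Theorem \ref{t3} forces $\ndim\,M\in\{\alpha,\alpha+1\}$, whence $\ndim\,M=\alpha+1$ and $M\in\mathcal{B}_\alpha$.

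The one place where the hypotheses do real work — and the only step I would call delicate — is the opening observation that $\ndim\,\frac{M}{X_\alpha(M)}\leq\alpha$; this fails in general and is precisely the content of the $AB5^*$ result Theorem \ref{t2}. Everything after that is routine bookkeeping with the dimension supremum formula and Theorem \ref{t3}, so I do not anticipate a genuine obstacle; one only has to be careful that a proper submodule of $A$ cannot contain $A$, which is immediate.
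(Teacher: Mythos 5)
Your proof is correct and follows essentially the same route as the paper: take $A=X_\alpha(M)$, get $\ndim\,\frac{M}{A}\leq\alpha$ from Theorem \ref{t2}, use the property extracted in Theorem \ref{t5} to bound proper submodules of $A$, and conclude via the supremum formula. In fact your $(2)\Rightarrow(1)$ direction is slightly cleaner than the paper's, which only re-derives the $\alpha$-shortness already assumed in the hypothesis, whereas you explicitly pin down $\ndim\,M=\alpha+1$ using Theorem \ref{t3}.
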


\begin{proof}
    $(1) \rightarrow (2)$ Since  $M\in\mathcal{B_\alpha}$, we have $\ndim\,M=\alpha+1$. By Theorem \ref{t2},  $\ndim\,\frac{M}{X_\alpha(M)} \leq \alpha$.
    This implies that $X_\alpha(M)\ne 0$ and $\ndim\,X_\alpha(M)=\alpha+1$. Let $B\subsetneq X_\alpha(M)$.  By Theorem \ref{t5},
    $\ndim\,B \leq \alpha$ and so $X_\alpha(M)$ is $(\alpha+1)$-atomic.\\
    $(2) \rightarrow (1)$ Let $X_\alpha(M)$ be $(\alpha+1)$-atomic, then $X_\alpha(M)$ is clearly $\alpha$-short and by Theorem \ref{t5},
    $\ndim\,\frac{M}{X_\alpha(M)} \leq \alpha$. Hence $M$ is $\alpha$-short, by Theorem \ref{t4}.
\end{proof}

The next theorem determines $AB5^*$ modules, belong to $\mathcal{B}$.

\begin{theo}\label{t8}
    Let $M$ be an  $AB5^*$ module. The following statements are equivalent.
    \begin{enumerate}
        \item  $M \in \mathcal{B}$
        \item  $\ndim\, M$ is not a limit ordinal and $M$ has an atomic submodule $A$ with
        $ \ndim\,\frac{M}{A} < \ndim\,A $.
    \end{enumerate}
\end{theo}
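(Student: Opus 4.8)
The plan is to prove the two implications separately. The direction $(1)\Rightarrow(2)$ runs through the structure theorem, Theorem~\ref{t7}, together with Theorem~\ref{t2}, and this is the only place the $AB5^*$ hypothesis is genuinely used. The direction $(2)\Rightarrow(1)$ is an ordinal-bookkeeping argument resting on Theorems~\ref{t3} and~\ref{t4}; it is in fact valid for arbitrary modules.

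For $(1)\Rightarrow(2)$: suppose $M\in\mathcal B$, so $M\in\mathcal B_\alpha$ for some ordinal $\alpha$, i.e.\ $M$ is $\alpha$-short and $\ndim\,M=\alpha+1$; in particular $\ndim\,M$ is a successor, hence not a limit ordinal. Since $M$ is an $\alpha$-short $AB5^*$ module in $\mathcal B_\alpha$, Theorem~\ref{t7} gives that $A:=X_\alpha(M)$ is $(\alpha+1)$-atomic; thus $A$ is a nonzero atomic submodule with $\ndim\,A=\alpha+1=\ndim\,M$. By Theorem~\ref{t2}, $\ndim\,\frac{M}{X_\alpha(M)}\leq\alpha$, so $\ndim\,\frac{M}{A}\leq\alpha<\alpha+1=\ndim\,A$. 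This is exactly statement~(2).

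For $(2)\Rightarrow(1)$: assume $\ndim\,M$ is not a limit ordinal and $M$ has an atomic submodule $A$ with $\ndim\,\frac{M}{A}<\ndim\,A$. Then $A\neq0$, so $\ndim\,A\geq0$, and since $\ndim\,M=\sup\{\ndim\,A,\ndim\,\frac{M}{A}\}$ while $\ndim\,\frac{M}{A}<\ndim\,A$, we get $\ndim\,M=\ndim\,A=:\delta$, which by hypothesis is not a limit ordinal, so $\delta-1$ is a well-defined ordinal $\geq-1$. First I show $A$ is $(\delta-1)$-short: every proper submodule $N$ of $A$ satisfies $\ndim\,N<\delta$, i.e.\ $\ndim\,N\leq\delta-1$, and $\ndim\,\frac{A}{A}=-1\leq\delta-1$, so the defining property of $(\delta-1)$-shortness holds; moreover it holds for no smaller ordinal, for if it held for some $\beta<\delta-1$ then $A$ would be $\beta'$-short for some $\beta'\leq\beta$, whence Theorem~\ref{t3} would give $\ndim\,A\leq\beta'+1\leq\beta+1<\delta$, a contradiction. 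Since in addition $\ndim\,\frac{M}{A}<\delta$ gives $\ndim\,\frac{M}{A}\leq\delta-1$, Theorem~\ref{t4}(1), applied to the submodule $A$ of $M$, shows that $M$ is $(\delta-1)$-short. Finally $\ndim\,M=\delta=(\delta-1)+1$, so $M\in\mathcal B_{\delta-1}\subseteq\mathcal B$.

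The only delicate point is the ordinal arithmetic in the reverse direction: the hypothesis that $\ndim\,M$ is not a limit ordinal is precisely what makes $\delta-1$ meaningful, and one must watch the boundary case $\delta=0$, where $(\delta-1)$-short means ``simple'' and the hypotheses force $M=A$ to be simple, consistently placing $M$ in $\mathcal B_{-1}$. No step is technically hard; the care is entirely in respecting the convention that the ordinal list begins at $-1$, and in keeping the minimality clause of $\alpha$-shortness under control when invoking Theorems~\ref{t3} and~\ref{t4}.
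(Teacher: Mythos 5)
Your proposal is correct and follows essentially the same route as the paper: $(1)\Rightarrow(2)$ via Theorem~\ref{t7} applied to $A=X_\alpha(M)$ together with the bound $\ndim\,\frac{M}{X_\alpha(M)}\leq\alpha$, and $(2)\Rightarrow(1)$ by observing that an atomic module of successor dimension $\delta$ is $(\delta-1)$-short and then invoking Theorem~\ref{t4}(1). Your version merely spells out the minimality check for $(\delta-1)$-shortness and the boundary case $\delta=0$, which the paper leaves implicit.
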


\begin{proof}
    $(1)\rightarrow (2)$ If $M \in \mathcal{B}$, then there exists an ordinal  $\alpha$ such that $M \in\mathcal{B_\alpha}$,
    so $\ndim\,M =\alpha+1$ is not a limit ordinal. Also, $A=X_\alpha(M)$ is an $(\alpha+1)$-atomic submodule of $M$ by Theorem \ref{t7}. Moreover, $\ndim\,\frac{M}{A} \leq\alpha<\alpha+1=\ndim\,A$, by Theorem \ref{t5}.
    $(2)\rightarrow(1)$ Let $\ndim\,M = \alpha +1$ and $A$ be an atomic submodule of $M$
    such that $\ndim\,\frac{M}{A}<\ndim\,A$,  then  $ \ndim\,A = \alpha + 1$. But $A$ is $\alpha$-short and so is $M$, by theorem \ref{t4}. Therefore $M \in \mathcal{B}$.
\end{proof}

The next important result is now immediate.

\begin{theo}
    Let $M$ be an  $AB5^*$ module. The following statements are equivalent.
    \begin{enumerate}
        \item $M \in \mathcal{A}$.
        \item $\ndim\,M$ is a limit ordinal or $\ndim\,A \leq \ndim\,\frac{M}{A}$,
        for any atomic submodule $A$ of $M$.
    \end{enumerate}
\end{theo}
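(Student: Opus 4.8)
The plan is to read this equivalence off Theorem \ref{t8} by taking the contrapositive of its equivalence. First I would fix the standing assumption, implicit in both conditions, that $M$ has Noetherian dimension, so that $M \in \mathcal{M}$; then, by the Remark following the notation, $\{\mathcal{A},\mathcal{B}\}$ is a partition of $\mathcal{M}$, and therefore $M \in \mathcal{A}$ if and only if $M \notin \mathcal{B}$. This reduces the task to describing the complement of $\mathcal{B}$ inside $\mathcal{M}$.

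Next I would apply Theorem \ref{t8}, which says that $M \in \mathcal{B}$ precisely when $\ndim\,M$ is not a limit ordinal \emph{and} $M$ has an atomic submodule $A$ with $\ndim\,\frac{M}{A} < \ndim\,A$. Negating this conjunction by De Morgan's law, $M \notin \mathcal{B}$ holds exactly when $\ndim\,M$ \emph{is} a limit ordinal, or $M$ admits no atomic submodule $A$ with $\ndim\,\frac{M}{A} < \ndim\,A$; the latter alternative is, word for word, the statement that $\ndim\,A \leq \ndim\,\frac{M}{A}$ for every atomic submodule $A$ of $M$ (read as vacuously true when $M$ has no atomic submodule). Combining this with the first step yields (1) $\Leftrightarrow$ (2).

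There is no genuine obstacle here: the substantive work sits in the structure theorem \ref{t5}, its refinement \ref{t7}, and the identification of $\mathcal{B}$ in Theorem \ref{t8}, and the $AB5^*$ hypothesis is used only to license those appeals. The one point I would take care to state cleanly is the interaction with the limit/successor dichotomy: no module of limit Noetherian dimension lies in $\mathcal{B}$ (a member of $\mathcal{B}_\alpha$ has dimension $\alpha+1$), which is exactly why the ``or'' in (2) is harmless, while in the successor case $\ndim\,M = \alpha+1$ the submodule $X_\alpha(M)$ is the $(\alpha+1)$-atomic submodule witnessing membership in $\mathcal{B}$ — so the second clause of (2) carries its weight only there. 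Making these observations explicit, rather than carrying out any computation, is all the proof requires.
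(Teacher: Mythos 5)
Your proposal is correct and matches the paper's intent exactly: the paper states this theorem with no written proof, calling it ``immediate'' from Theorem \ref{t8}, and your derivation --- negating the characterization of $\mathcal{B}$ in Theorem \ref{t8} and using the partition $\{\mathcal{A},\mathcal{B}\}$ of $\mathcal{M}$ --- is precisely that intended argument. Your explicit remarks about the vacuous case and the limit/successor dichotomy are sound and, if anything, more careful than the paper.
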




\end{document}